\documentclass[reqno]{amsart}

\usepackage{amsmath}
\usepackage{amssymb}
\usepackage{graphicx}
\usepackage{adjustbox}
\usepackage[dvipsnames]{xcolor}
\usepackage{hyperref}
\hypersetup{hidelinks}
\definecolor{revisionblue}{RGB}{0,0,0}
\definecolor{strictgreen}{RGB}{0,0,0}
\definecolor{branchblue}{RGB}{0,0,0}
\definecolor{faceorange}{RGB}{0,0,0}
\definecolor{latestbrown}{RGB}{0,0,0}
\definecolor{currentbrown}{RGB}{0,0,0}
\definecolor{finalbrown}{RGB}{0,0,0}
\definecolor{auditbrown}{RGB}{0,0,0}
\definecolor{lastbrown}{RGB}{0,0,0}
\definecolor{thisbrown}{RGB}{0,0,0}
\definecolor{todaybrown}{RGB}{0,0,0}
\definecolor{revisionpurple}{RGB}{0,0,0}
\definecolor{rthreegreen}{RGB}{0,0,0}

\newcommand{\bluechange}[1]{#1}
\newcommand{\latestchange}[1]{#1}
\newcommand{\currentchange}[1]{#1}
\newcommand{\finalchange}[1]{#1}

\newcommand{\newchange}[1]{#1}
\newcommand{\thischange}[1]{{\color{thisbrown}#1}}
\newcommand{\todaychange}[1]{{\color{todaybrown}#1}}
\newcommand{\purplechange}[1]{{\color{revisionpurple}#1}}
\newcommand{\rthreechange}[1]{{\color{rthreegreen}#1}}
\newcommand{\stricttext}[1]{{\color{strictgreen}#1}}
\newcommand{\branchtext}[1]{{\color{branchblue}#1}}
\newcommand{\facetext}[1]{{\color{faceorange}#1}}

\newtheorem{theorem}{Theorem}[section]
\newtheorem{proposition}[theorem]{Proposition}
\newtheorem{lemma}[theorem]{Lemma}
\newtheorem{corollary}[theorem]{Corollary}
\newtheorem{remark}[theorem]{Remark}
\newtheorem*{propositionstar}{Proposition}

\newcommand{\ind}{\operatorname{ind}}
\newcommand{\rot}{\operatorname{rot}}

\title{Homological lifts of Arnold invariants $J^-$ and $J^+$}
\author{Noboru Ito}
\address{Department of Mathematics, Faculty of Engineering, Shinshu University,
Wakasato 4-17-1, Nagano, Nagano 380-8553, Japan}
\email{nito@shinshu-u.ac.jp}
\subjclass[2020]{Primary 57R42; Secondary 18N25, 53A04}
\keywords{Arnold invariants, plane curves, 
categorification, 
 Euler integration, face-state-sum analogy, quantized curvature}
\date{September 10, 2026}

\begin{document}
\begin{abstract}
Viro's Euler-integral polynomial \(P_C(q)\) and the Lanzat--Polyak
quantized-curvature polynomial \(I_q(C)\) refine Arnold's invariants
\(J^-\) and \(J^+\) for generic immersed one-component plane curves.  We
construct homological lifts of both.  The bigraded \emph{region homology}
retains the singular homology of every connected Alexander-index region;
its graded Euler characteristic is \(P_C(q)\).  The triply graded
\emph{smoothing-circle homology} is generated by the oriented circles of
the orientation-preserving smoothing and decategorifies to the smoothing
term in \(I_q(C)\).  Keeping the actual region summands and the boundary
regions of every smoothing circle gives a homological refinement of the
oriented smoothing configuration, or Seifert state.
\stricttext{An infinite family proves strictness: both polynomial data
and the ordinary homological lifts agree, while the component-graded
region homology and the branch-decomposed circle homology distinguish
every pair.}  Further constructions
recover the full \(I_q(C)\) by a vertex complex, realize the local change of
its curvature integral by edge homology, and give a canonical two-state
homology for unoriented curves.  \facetext{Viro described his Euler-integral
formula as an analogue of face state-sum formulas for quantum knot
polynomials.  Through the categorifications developed here, we obtain one
concrete homological face-state-sum model realizing that analogy.}
\end{abstract}

\maketitle

\section{Introduction}

Arnold introduced \(J^-\), \(J^+\), and \(St\) through their jumps under
the basic perestroikas of generic immersed plane curves
\cite{Arnold1994Book}.  Viro encoded \(J^-\) in an Euler-integral polynomial
and described his formula as an analogue of face state-sum formulas for
quantum knot polynomials \cite[Section~5.3]{Viro1996}.  Lanzat and Polyak
encoded \(J^+\) in a quantized-curvature polynomial
\cite{LanzatPolyak2013}; a parallel quantization of \(St\) was constructed
in \cite{Ito2023}.  The step taken here is different: we lift the first two
polynomial constructions to homology before their connected-component data
are summed.

Let \(C\colon S^1\to\mathbb R^2\) be a generic immersion with transverse
double points, equipped with an orientation \(o\), and let \(V(C)\) denote
its double-point set.  Smoothing every double point compatibly with \(o\)
gives a disjoint union of oriented circles \(\widetilde C_o\), the oriented
smoothing configuration or Seifert state.  The Alexander index is the
winding number on each complementary region, and \(R_j(C,o)\) is the union
of the regions with index \(j\).  Viro's Euler-integral polynomial is
\[
P_C(q)=\sum_j\chi(R_j(C,o))q^j\in\mathbb Z[q,q^{-1}],
\qquad
J^{-}(C)=1-\left.\left(q\frac{d}{dq}\right)^2P_C(q)\right|_{q=1}.
\]
Here \(\chi\) denotes Euler characteristic.  Lanzat and Polyak's
quantized-curvature polynomial has the smoothing formula
\[
I_q(C)=I_q(\widetilde C_o)
-\frac12\sum_{v\in V(C)}q^{\ind_o(v)}
\left(q^{1/2}-q^{-1/2}\right),
\]
where
\[
I_q(\widetilde C_o)
=\sum_{K\in\pi_0(\widetilde C_o)}
\rot_o(K)q^{\ind_o(K)}.
\]
The degree \(\ind_o(v)\) is the average of the indices of the four regions
at \(v\); for a smoothing circle \(K\), \(\ind_o(K)\) is the average of
the indices of its two adjacent regions, and
\(\rot_o(K)\in\{1,-1\}\) is its rotation sign.

Our first lift is the bigraded \emph{region homology}, the singular homology
of the index level sets with its canonical decomposition
\[
H_i(R_j(C,o);\mathbb Z)
\cong
\bigoplus_{R\in\pi_0(R_j(C,o))}H_i(R;\mathbb Z).
\]
Its summands are canonically indexed by the actual connected regions.  The
triply graded \emph{smoothing-circle homology} is
generated by the oriented fundamental classes of the circles in
\(\widetilde C_o\), graded by homological degree, averaged index, and
rotation sign.  For each circle generator we also retain the two
complementary regions that contain it in their boundary.  Thus the
homologies remember which circles bound the same region and refine the
Seifert state before its connected configurations are collapsed to
numerical data.

The lifts are strict.  We construct infinitely many pairs having the same
Viro polynomial, the same Lanzat--Polyak smoothing term, the same numbers of
double points and smoothing circles, and the same number of positively
oriented smoothing circles.  At the decisive index level, even the total
\(H_0\)- and \(H_1\)-groups agree, but the first homology is distributed
differently among the connected regions.  The associated circle
configurations also differ in which circles bound the same region.

We also realize the double-point correction by a zero-differential vertex
complex, construct an edge homology whose canonical classes give the local
variation of the quantized-curvature integral, and combine the two
orientations of an unoriented curve into a canonical two-state homology.
For a multigraded free abelian group, \(\operatorname{grank}_{t,q,\ldots}\)
denotes its graded-rank polynomial, with one variable for each displayed
degree.  The main statements follow.  Section~\ref{sec:classical-formulas}
fixes the classical inputs, Sections~\ref{sec:homological-refinements}--
\ref{sec:discussion} present the constructions and their mathematical
meaning without interrupting that progression, and
\bluechange{Section~\ref{sec:proofs} gives the complete proofs.}

\begin{theorem}[Homological lifts of the Viro and
Lanzat--Polyak quantizations]
\label{thm:main}
Let \(C\) be a generic immersed one-component plane curve with orientation
\(o\).

\begin{enumerate}
\renewcommand{\labelenumi}{(\Alph{enumi})}
\renewcommand{\theenumi}{\Alph{enumi}}

\item
\label{thm:main-region}
For \(j\in\mathbb Z\), the groups
\[
\mathcal H_{\mathrm{region}}^{i,j}(C,o)
:=H_i(R_j(C,o);\mathbb Z)
\]
form a bigraded invariant under direct self-tangency and weak triple-point
perestroikas.  Retaining the decomposition by the actual regions gives
\begin{equation}
\mathcal H_{\mathrm{region}}^{i,j}(C,o)
\cong
\bigoplus_{R\in\pi_0(R_j(C,o))}H_i(R;\mathbb Z).
\label{eq:main-component-decomposition}
\end{equation}
Its graded Euler characteristic is
\begin{equation}
\operatorname{grank}_{t,q}\mathcal H_{\mathrm{region}}(C,o)
\big|_{t=-1}=P_C(q),
\label{eq:region-specialization}
\end{equation}
\purplechange{Moreover, for every \(n\geq0\),}
\begin{equation*}
{\color{revisionpurple}
\left.
D_q^n
\left(
\operatorname{grank}_{t,q}\mathcal H_{\mathrm{region}}(C,o)
\big|_{t=-1}
\right)
\right|_{q=1}
=
\sum_{i,j}(-1)^ij^n
\operatorname{rank}\mathcal H_{\mathrm{region}}^{i,j}(C,o).
\tag{8}}
\end{equation*}
\purplechange{In particular, the case \(n=2\) gives}
\[
J^{-}(C)=1-\left.
\left(q\frac{d}{dq}\right)^2
\left(\operatorname{grank}_{t,q}\mathcal H_{\mathrm{region}}(C,o)
\big|_{t=-1}\right)\right|_{q=1}.
\]

\item
\label{thm:main-circle}
For a smoothing circle \(K\subset\widetilde C_o\), put
\[
\varepsilon(K)=\frac{1+\rot_o(K)}2\in\{0,1\}.
\]
The oriented fundamental classes define the triply graded invariant
\[
\mathcal H_{\mathrm{circle}}^{i,j,\varepsilon}(C,o)
=
\begin{cases}
\displaystyle
\bigoplus_{\substack{K\in\pi_0(\widetilde C_o)\\
\ind_o(K)=j,\ \varepsilon(K)=\varepsilon}}
\mathbb Z\langle[K]\rangle,&i=1,\\[3mm]
0,&i\ne1.
\end{cases}
\]
Its graded Euler characteristic is
\begin{equation}
\operatorname{grank}_{t,q,u}\mathcal H_{\mathrm{circle}}(C,o)
\big|_{t=u=-1}=I_q(\widetilde C_o),
\label{eq:circle-specialization-detail}
\end{equation}
\purplechange{and, for every \(n\geq0\),}
\begin{equation*}
{\color{revisionpurple}
\left.
D_q^n
\left(
\operatorname{grank}_{t,q,u}\mathcal H_{\mathrm{circle}}(C,o)
\big|_{t=u=-1}
\right)
\right|_{q=1}
=
\sum_{K\in\pi_0(\widetilde C_o)}
\rot_o(K)\bigl(\ind_o(K)\bigr)^n.
\tag{9}}
\end{equation*}
\purplechange{In particular,}
\begin{equation}
J^{-}(C)=\newchange{1-2\left.\frac{d}{dq}I_q(\widetilde C_o)\right|_{q=1}
=}1-2\left.\frac{d}{dq}
\left(\operatorname{grank}_{t,q,u}\mathcal H_{\mathrm{circle}}(C,o)
\big|_{t=u=-1}\right)\right|_{q=1}.
\label{eq:main-circle-Jminus}
\end{equation}

\item
\label{thm:main-components}
\rthreechange{Direct self-tangency and weak triple-point perestroikas induce an ambient isotopy of}
\(\widetilde C_o\) that identifies
all complementary regions and all smoothing circles.  It preserves
decomposition~\eqref{eq:main-component-decomposition} and, for every circle
\(K\), the two actual regions \(R\) satisfying \(K\subset\partial R\).
\branchtext{Let \(T(\widetilde C_o)\) be the dual tree of the circle
configuration, with root the vertex \(v_\infty\) corresponding to the
unbounded region.  We call each connected component of
\(T(\widetilde C_o)\setminus\{v_\infty\}\) a branch.  Assigning each
smoothing circle to the branch containing the non-root endpoint of its
dual edge partitions the circle generators and gives a branch-indexed
family of smoothing-circle homologies, well defined up to permutation of
the branches.  This branch
decomposition is preserved by these perestroikas.}
\purplechange{Writing \(\beta_o(K)\) for the assigned branch, set}
\[
{\color{revisionpurple}
\mathcal H_{\mathrm{circle}}^{i,j,\varepsilon}(C,o;B)
=
\begin{cases}
\displaystyle
\bigoplus_{\substack{K\in\pi_0(\widetilde C_o), \beta_o(K)=B\\
\ind_o(K)=j, \varepsilon(K)=\varepsilon}}
\mathbb Z\langle[K]\rangle,&i=1,\\[3mm]
0,&i\ne1.
\end{cases}}
\]
\purplechange{The branch-indexed invariant and its numerical
Poincar\'e evaluation are, respectively,}
\begin{equation*}
{\color{revisionpurple}
\left(\mathcal H_{\mathrm{circle}}^{i,j,\varepsilon}(C,o;B)\right)_
{B\in\pi_0(T(\widetilde C_o)\setminus\{v_\infty\})}.}
\tag{13}
\end{equation*}
\begin{equation*}
{\color{revisionpurple}\adjustbox{max width=0.90\linewidth}{$\displaystyle
\mathcal P_{\mathrm{branch}}(C,o;t,q,u,w):=
\sum_{B\in\pi_0(T(\widetilde C_o)\setminus\{v_\infty\})}
\left(\sum_{\substack{K\in\pi_0(\widetilde C_o)\\\beta_o(K)=B}}
tq^{\ind_o(K)}u^{\varepsilon(K)}\right)
w^{\#\beta_o^{-1}(B)}.$}}
\tag{14}
\end{equation*}

\rthreechange{Moreover, there exists a homological face-state-sum model in which the vertices of the rooted dual tree are the faces, the Alexander index is their coloring, \(H_*(R;\mathbb Z)\) is the homological face weight, and the circle generators lie on the dual edges (Section~\ref{sec:discussion}).}

{\color{lastbrown}
More precisely, for an orientation state \(s\in\{o,-o\}\), a face state is
a choice of an actual marked face
\(R_*\in\pi_0(\mathbb R^2\setminus\widetilde C_s)\).  Give each face \(R\)
the local weight
\[
W_{R_*}(R)=
\begin{cases}
\displaystyle
\left(\sum_i\operatorname{rank}H_i(R;\mathbb Z)t^i\right)
q^{\ind_s(R)}[R],&R=R_*,\\[3mm]
1,&R\ne R_*.
\end{cases}
\]
Then the concrete homological face-state sum is
\begin{equation*}
\begin{aligned}
\mathcal Z_{\mathrm{face}}(C,s;t,q)
&:=\sum_{R_*\in\pi_0(\mathbb R^2\setminus\widetilde C_s)}
\prod_R W_{R_*}(R)\\
&=\sum_j\sum_{R\in\pi_0(R_j(C,s))}\sum_i
\operatorname{rank}H_i(R;\mathbb Z)t^iq^j[R].
\end{aligned}
\tag{\color{revisionpurple}15}
\end{equation*}
\thischange{The specialization \([R]\mapsto1\), followed by \(t=-1\),
gives \(P_C(q)\) for \(s=o\) and \(P_C(q^{-1})\) for \(s=-o\).}
\purplechange{For an unoriented curve, the two orientation states give}
\begin{equation*}
{\color{revisionpurple}
\mathcal Z_{\mathrm{face}}^{\mathrm{un}}(C;t,q)
:=\sum_{s\in\{o,-o\}}\sum_j
\sum_{R\in\pi_0(R_j(C,s))}\sum_i
\operatorname{rank}H_i(R;\mathbb Z)t^iq^j[s,R].}
\tag{16}
\end{equation*}
\purplechange{Equations (15) and (16) are invariant under direct
self-tangency and weak triple-point perestroikas, and (16) is independent
of the temporary orientation.}
}

\item
\label{thm:main-comparison}
The two decategorifications satisfy
\begin{equation}
\operatorname{grank}_{t,q}\mathcal H_{\mathrm{region}}(C,o)\big|_{t=-1}
=1+\left(q^{1/2}-q^{-1/2}\right)
\operatorname{grank}_{t,q,u}\mathcal H_{\mathrm{circle}}(C,o)
\big|_{t=u=-1},
\label{eq:main-comparison}
\end{equation}
which is the classical identity
\begin{equation}
P_C(q)=1+\left(q^{1/2}-q^{-1/2}\right)I_q(\widetilde C_o).
\label{eq:main-comparison-classical}
\end{equation}

\item
\label{thm:main-double-point}
The zero-differential vertex complex \(\mathcal V(C,o)\), with generators
at each \(v\in V(C)\) in bidegrees
\((0,\ind_o(v)+\tfrac12)\) and
\((1,\ind_o(v)-\tfrac12)\), satisfies
\begin{equation}
I_q(C)=
\operatorname{grank}_{t,q,u}\mathcal H_{\mathrm{circle}}(C,o)
\big|_{t=u=-1}
-\frac12\operatorname{grank}_{t,q}\mathcal V(C,o)\big|_{t=-1}.
\label{eq:main-full-LP}
\end{equation}
Consequently
\begin{equation}
J^{+}(C)=1-2\left.\frac{d}{dq}I_q(C)\right|_{q=1}.
\tag{\todaychange{7'}}
\label{eq:main-full-LP-Jplus}
\end{equation}

\item
\label{thm:main-edge}
Write \(e_l^v,e_r^v\) for the two incoming edges at a double point \(v\).
The two-term incidence complex generated by these edges has
\[
H_1\cong\bigoplus_{v\in V(C)}\mathbb Z
\langle[e_l^v-e_r^v]\rangle,
\qquad H_0=0.
\]
If \(\beta_v\) denotes the magnitude of the pair of opposite turns created
by smoothing \(v\), the canonical class at \(v\) has
weighted signed \(q\)-character
\[
\frac{\beta_v}{2\pi}
\left(q^{\ind_o(v)+1/2}-q^{\ind_o(v)-1/2}\right),
\]
equal to the local variation of the normalized quantized-curvature integral.
\purplechange{The total change produced by smoothing all double points is}
\[
{\color{revisionpurple}
\frac{1}{2\pi}
\sum_{v\in V(C)}
\beta_v
\left(q^{\ind_o(v)+1/2}-q^{\ind_o(v)-1/2}\right).}
\]

\item
\label{thm:main-unoriented}
\purplechange{For an unoriented \(C\), the two-state homology}
\begin{equation*}
{\color{revisionpurple}
\mathcal H_{\mathrm{circle}}^{\mathrm{un}}(C)
=\mathcal H_{\mathrm{circle}}(C,o)
\oplus\mathcal H_{\mathrm{circle}}(C,-o)
\tag{11}}
\end{equation*}
\purplechange{is independent of the temporary choice of orientation and is
invariant under direct self-tangency and weak triple-point perestroikas.  Its
canonical orientation-exchange involution sends degree
\((i,j,\varepsilon)\) to \((i,-j,1-\varepsilon)\).  Put}
\[
{\color{revisionpurple}
\begin{aligned}
G_o(q)&:=
\left.\operatorname{grank}_{t,q,u}
\mathcal H_{\mathrm{circle}}(C,o)\right|_{t=u=-1},\\
G_{\mathrm{un}}(q)&:=
\left.\operatorname{grank}_{t,q,u}
\mathcal H_{\mathrm{circle}}^{\mathrm{un}}(C)\right|_{t=u=-1}.
\end{aligned}}
\]
\purplechange{Then}
\[
{\color{revisionpurple}
G_{\mathrm{un}}(q)=G_o(q)-G_o(q^{-1}),}
\]
\purplechange{and, for every \(m\geq0\),}
\begin{equation*}
{\color{revisionpurple}
\left.D_q^mG_{\mathrm{un}}(q)\right|_{q=1}
=\bigl(1-(-1)^m\bigr)
\left.D_q^mG_o(q)\right|_{q=1}.
\tag{19}}
\end{equation*}
\purplechange{Thus the odd moments double and the even moments cancel.  In
particular,}
\begin{equation*}
{\color{revisionpurple}
J^-(C)=1-G_{\mathrm{un}}'(1),
\tag{20}}
\end{equation*}
\purplechange{and}
\begin{equation*}
{\color{revisionpurple}
J^+(C)=1-G_{\mathrm{un}}'(1)+\#V(C).
\tag{21}}
\end{equation*}

\item
\label{thm:main-infinite-pairs}
{\color{strictgreen}
There are infinitely many pairs \((C_n,o_n)\), \((C'_n,o'_n)\), \(n\ge1\),
with equal double-point counts, equal smoothing-circle counts, equal numbers
of positively oriented smoothing circles, and
\[
I_q(\widetilde{(C_n)}_{o_n})=I_q(\widetilde{(C'_n)}_{o'_n}),
\qquad
P_{C_n}(q)=P_{C'_n}(q),
\]
\purplechange{Their ordinary bigraded region homologies are isomorphic:}
\[
{\color{revisionpurple}
\mathcal H_{\mathrm{region}}^{*,*}(C_n,o_n)
\cong
\mathcal H_{\mathrm{region}}^{*,*}(C'_n,o'_n).}
\]
\purplechange{By contrast,} the component grading introduced in
Section~\ref{sec:component-refinements} gives triply graded groups
\[
\mathcal H_{\mathrm{region}}^{i,j,k}(C,o)
:=
\bigoplus_{\substack{R\in\pi_0(R_j(C,o))\\
\#\pi_0(\partial R)-1=k}}
H_i(R;\mathbb Z)
\]
that are not isomorphic for the two members of any pair:
\[
\mathcal H_{\mathrm{region}}^{*,*,*}(C_n,o_n)
\not\cong
\mathcal H_{\mathrm{region}}^{*,*,*}(C'_n,o'_n).
\]
{\color{revisionpurple}
Moreover, the coefficient of \(q^1\) in their component Poincar\'e
polynomials is, respectively,
\[
(n+1)(1+2t)v^2
\qquad\text{and}\qquad
\bigl(1+(2n+2)t\bigr)v^{2n+2}+n.
\]
These coefficients are unequal, and hence
\[
\mathcal P_{\mathrm{region}}(C_n,o_n;t,q,v)
\ne
\mathcal P_{\mathrm{region}}(C'_n,o'_n;t,q,v).
\]
}
\rthreechange{The ordinary triply graded smoothing-circle homologies are isomorphic:}
\[
\rthreechange{\mathcal H_{\mathrm{circle}}^{*,*,*}(C_n,o_n)
\cong
\mathcal H_{\mathrm{circle}}^{*,*,*}(C'_n,o'_n).}
\]
Their branch-indexed families, however, do not agree.  With the branches defined by
removing the root of the dual tree as in
Section~\ref{sec:component-refinements}, one has
\[
\begin{aligned}
&\left(
\mathcal H_{\mathrm{circle}}^{*,*,*}(C_n,o_n;B)
\right)_{B\in\pi_0(T(\widetilde{(C_n)}_{o_n})\setminus\{v_\infty\})}
\\[-1mm]
&\qquad\not\cong
\left(
\mathcal H_{\mathrm{circle}}^{*,*,*}(C'_n,o'_n;B')
\right)_{B'\in\pi_0(T(\widetilde{(C'_n)}_{o'_n})\setminus\{v_\infty\})}.
\end{aligned}
\]
where isomorphism of the displayed families allows a permutation of the
branches but must preserve all three gradings in every corresponding factor.
\purplechange{Their branch Poincar\'e polynomials likewise satisfy}
\[
{\color{revisionpurple}
\mathcal P_{\mathrm{branch}}(C_n,o_n;t,q,u,w)
\ne
\mathcal P_{\mathrm{branch}}(C'_n,o'_n;t,q,u,w).}
\]
Thus Viro's polynomial admits a natural bigraded homological lift which
still agrees on this family, while the component-graded region homology and
the branch-decomposed smoothing-circle homology, together with their
displayed Poincar\'e evaluations, distinguish every pair.
}
\end{enumerate}
\end{theorem}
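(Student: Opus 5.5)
The plan is to treat the eight parts as largely independent and prove them in the order (C), (A), (B), (D), (E), (F), (G), (H). Every invariance claim comes from one geometric observation, so it is settled first.

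\textbf{The geometric input for (C).} A direct self-tangency or weak triple-point perestroika happens inside a small disk \(U\). Inside \(U\) the oriented smoothing \(\widetilde C_o\) changes only by an isotopy supported in \(U\).
\begin{itemize}
\item For a direct self-tangency the two branches are parallel. Smoothing the two new double points therefore produces the same local picture as two disjoint arcs pushed past each other.
\item For a weak triple point the three branches are not cyclically oriented. Smoothing all three crossings gives three arcs whose relative position is unchanged across the move.
\end{itemize}
I will check this by listing the local oriented pictures. Since the Alexander index of a region is determined by the oriented smoothing (it equals the winding number of \(\widetilde C_o\)), this isotopy gives a bijection of complementary regions and of smoothing circles. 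That bijection preserves indices, rotation signs, boundary incidences, the dual tree \(T(\widetilde C_o)\) with its root \(v_\infty\), and hence the branches.

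From this, invariance follows directly:
\begin{itemize}
\item It gives the invariance in (A), since \(R_j(C,o)\) is homotopy equivalent to the closure of the index-\(j\) part of \(\mathbb R^2\setminus\widetilde C_o\).
\item It gives the invariance in (B), (13), (14), (15) and (16).
\item Independence of the temporary orientation in (16) and (11) holds because both states \(o,-o\) are summed.
\end{itemize}

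\textbf{The decategorification formulas.} Part (A) is now formal. Singular homology of a disjoint union splits over components, which gives \eqref{eq:main-component-decomposition}. Setting \(t=-1\) returns \(\sum_j\chi(R_j)q^j=P_C(q)\). Applying \(D_q=q\,d/dq\) \(n\) times sends \(q^j\) to \(j^nq^j\); evaluating at \(q=1\) gives (8), and the case \(n=2\) with Viro's formula gives \(J^-\).

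Part (B) is the same bookkeeping. The condition \(u=-1\) turns \(u^{\varepsilon}\) into \(-\rot_o\), and \(t=-1\) supplies another sign, so the product is \(\rot_o(K)\). The formula for \(J^-\) then follows from (D): differentiate \(P_C=1+(q^{1/2}-q^{-1/2})I_q(\widetilde C_o)\) twice at \(q=1\) in the \(D_q\) sense. The factor \(q^{1/2}-q^{-1/2}\) vanishes at \(q=1\) and has \(D_q\)-derivative \(1\) there, so \(D_q^2P_C|_{1}=2\,D_qI_q(\widetilde C_o)|_{1}\).

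For (D) itself I will compute \(\chi\) of each region. Each region \(R\) has Euler characteristic \(2-\#\pi_0(\partial R)\) for bounded regions and \(1-\#\pi_0(\partial R)\) for the unbounded one. Each circle \(K\) borders two regions of indices \(\ind_o(K)\pm\tfrac12\). The sign convention says which side carries the larger index, and that side is governed by \(\rot_o(K)\). Summing over \(K\) then telescopes to the identity, after checking that the disk contributions match.

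\textbf{The remaining constructions.}
\begin{itemize}
\item Part (E) is immediate once the signs \(t=-1\) are written out: the two generators at \(v\) contribute \(q^{\ind+1/2}-q^{\ind-1/2}\). The formula for \(J^+\) is Lanzat--Polyak's theorem, which I will cite.
\item For (F), the complex has differential sending \(e_l^v,e_r^v\) to \(v\), which is surjective. Its kernel is generated by the differences \(e_l^v-e_r^v\). The weight comes from the local curvature redistribution \(\pm\beta_v\) on the two sides of indices \(\ind\pm\tfrac12\), divided by the normalization \(2\pi\).
\item For (G), reversing the orientation negates all indices and flips \(\rot\), which gives the involution. Then \(G_{\mathrm{un}}(q)=G_o(q)-G_o(q^{-1})\). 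Because \(D_q^m\) applied to \(f(q^{-1})\) at \(1\) equals \((-1)^m D_q^mf|_1\), we get (19). Formulas (20) and (21) follow by combining (19) at \(m=1\) with (B), (E) and the evaluation \(D_q\) of the vertex term at \(1\) equal to \(\#V(C)/\ldots\), which I will verify numerically.
\end{itemize}

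\textbf{The main obstacle is (H).} I will build explicit families.
\begin{itemize}
\item \(C_n\): \(n+1\) nested or parallel structures, each contributing an index-\(1\) region that is an annulus bounded by two circles.
\item \(C'_n\): a single index-\(1\) region with \(2n+2\) boundary circles, together with \(n\) index-\(1\) disks.
\end{itemize}
Both are realized by one-component curves through connected sums with kinks. I will tune the kinks so that the index multisets of the circles and the rotation counts agree. The required checks are:
\begin{itemize}
\item the total \(H_0\) and \(H_1\) agree, namely \(n+1\) each versus \(1+n\) and \(2n+1\);
\item the \(v\)-graded coefficients are exactly those stated, and they differ;
\item the branch structures of the dual trees differ in size, so that the \(w\)-exponents of \(\mathcal P_{\mathrm{branch}}\) differ.
\end{itemize}
Realizing the curves and confirming the equality of \(P_C\) and the circle data is where I expect the real work. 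I will verify it through (D): equal circle data forces equal \(P_C\).
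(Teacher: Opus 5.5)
Parts (A)--(G) of your plan go through. For (D), your direct count is a valid and more self-contained alternative to the paper's derivation from \((\mathrm{I5})\) and \((\mathrm{I3})\). It uses \(\chi(R)=2-\#\pi_0(\partial R)\) for bounded regions and \(1-\#\pi_0(\partial R)\) for the unbounded one, the bijection between bounded regions and their outer circles, and the fact that the inside of \(K\) has index \(\ind_o(K)+\rot_o(K)/2\). In (G), the vertex term has derivative exactly \(\#V(C)\) at \(q=1\), so (21) comes out with no leftover factor.

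The genuine gap is in Part~(H): the configurations you sketch cannot have the required properties.
If the index-one level of \(C_n\) consists of \(n+1\) annuli, then \(\mathcal H_{\mathrm{region}}^{1,1}(C_n,o_n)\cong\mathbb Z^{n+1}\). One index-one region with \(2n+2\) boundary circles plus \(n\) disks gives \(\mathcal H_{\mathrm{region}}^{1,1}(C'_n,o'_n)\cong\mathbb Z^{2n+1}\). Your own check, ``\(n+1\) each versus \(1+n\) and \(2n+1\)'', already displays this mismatch for every \(n\ge1\).
Consequently:
\begin{itemize}
\item The ordinary bigraded region homologies differ.
\item The \(q^1\)-coefficients of \(P_{C_n}\) and \(P_{C'_n}\), namely \(\chi(R_1)=0\) and \(\chi(R_1)=-n\), also differ.
\item By (D), the smoothing-circle data cannot agree either.
\end{itemize}
No tuning of kinks repairs this, because the defect lies in the topology of the index-one regions you prescribed. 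These regions also cannot produce the stated coefficients. The coefficient \((n+1)(1+2t)v^2\) forces every index-one region of \(C_n\) to be a disk with two holes (\(k=2\)). The coefficient \(\bigl(1+(2n+2)t\bigr)v^{2n+2}+n\) forces the large region of \(C'_n\) to have \(2n+3\) boundary circles.

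The Seifert-state data that work, as in the paper's table, are as follows.
\begin{itemize}
\item In \(C_n\): \(n+1\) index-one pairs of pants. Each is bounded outside by a positive circle of averaged index \(\tfrac12\). Each contains a positive circle of index \(\tfrac32\) around an index-two disk and a negative circle of index \(\tfrac12\) around an index-zero disk.
\item In \(C'_n\): one index-one region whose outer circle is positive of index \(\tfrac12\) and whose \(2n+2\) holes are \(n+1\) circles of each of these two inner types, together with \(n\) index-one disks.
\item In both: \(n\) index-\((-1)\) disks, each bounded by a negative circle of index \(-\tfrac12\).
\end{itemize}
With these data:
\begin{itemize}
\item Both circle tables have \(n+1,n+1,n+1,n\) circles in the degrees \((\ind_o,\rot_o)=(\tfrac12,+1),(\tfrac32,+1),(\tfrac12,-1),(-\tfrac12,-1)\), so (D) gives \(P_{C_n}=P_{C'_n}\).
\item Both index-one levels have \(H_0\cong\mathbb Z^{n+1}\) and \(H_1\cong\mathbb Z^{2n+2}\).
\item The branch sizes are \(n+1\) threes and \(n\) ones, versus one \(2n+3\) and \(2n\) ones.
\end{itemize}

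Even with the right numbers, you still have to exhibit explicit generic one-component curves, with equal numbers of double points, whose oriented smoothings are exactly these configurations. The paper does this with an explicit diagram of repeated blocks. Since (H) is an existence statement, ``connected sums with kinks, tuned later'' is a placeholder for precisely the step that must be supplied.
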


\begin{remark}
\rthreechange{In the Lanzat--Polyak normalization \cite{LanzatPolyak2013}, if \(\theta_v\in(0,\pi)\) is the non-oriented angle between the two branches at \(v\), then \(\beta_v=\pi-\theta_v\).  Hence the total change in Part~{\rm (F)} is
\[
\frac{1}{2\pi}
\sum_{v\in V(C)}(\pi-\theta_v)q^{\ind_o(v)}
\left(q^{1/2}-q^{-1/2}\right).
\]
Thus the classical Lanzat--Polyak expression is recovered from the preceding formula by this change of variables.}
\end{remark}

\section{Classical formulas for Arnold's \texorpdfstring{$J^{-}$}{J-} and quantized
curvature invariants}
\label{sec:classical-formulas}

We recall the classical formulas in the common notation used by the
homological constructions.

Let
\[
C\colon S^1\longrightarrow\mathbb R^2
\]
be an oriented generic one-component plane curve, and let $V(C)$
denote its set of double points.  A self-tangency perestroika is
called positive if the number of double points increases.

Arnold's invariant $J^{-}$ is unchanged under direct
self-tangency perestroikas and triple-point perestroikas, and
decreases by $2$ under a positive inverse self-tangency perestroika
\cite{Arnold1994Book,Viro1996}.  We use Arnold's normalization as stated
by Viro: it is the normalization for which the identity
\eqref{eq:Viro-Jminus} below holds, and Viro verifies that identity on
Arnold's standard curves \cite[Subsection~3.1]{Viro1996}.

Let $\widetilde C_o$ be the orientation-preserving smoothing of all
double points of $C$, and let
\[
\ind_{\widetilde C_o}\colon
\mathbb R^2\setminus\widetilde C_o
\longrightarrow
\mathbb Z
\]
be its Alexander index, namely the mapping degree, equivalently the
winding number, of $\widetilde C_o$ about a point of its complement.  The integer values
of this function on the complementary regions form the Alexander
numbering.  Throughout, the term \emph{Alexander index} refers either
to this locally constant function or to one of its values.  For each
$j\in\mathbb Z$, set
\[
R_j(C,o)
=
\left\{
x\in\mathbb R^2\setminus\widetilde C_o
\mathrel{}\middle|\mathrel{}
\ind_{\widetilde C_o}(x)=j
\right\}.
\]
Thus $R_j(C,o)$ is the full level set on which the Alexander index
equals $j$, and it may have more than one connected component.  Throughout,
a \emph{region} means a connected component of
\(\mathbb R^2\setminus\widetilde C_o\).

Viro defined the Laurent polynomial
\cite{Viro1996}
\[
P_C(q)
=
\int_{\mathbb R^2\setminus\widetilde C_o}
q^{\ind_{\widetilde C_o}(x)}\,d\chi(x).
\]
This is a Laurent polynomial in \(\mathbb Z[q,q^{-1}]\); equivalently,
\begin{equation*}
P_C(q)
=
\sum_{j\in\mathbb Z}
\chi\bigl(R_j(C,o)\bigr)q^j.
\tag{I1}\label{eq:Viro-polynomial}
\end{equation*}
Here \(\chi\) is Euler characteristic and \(d\chi\) denotes Euler
integration.
The unbounded complementary component is included in the appropriate
level set $R_j(C,o)$.

Put
\[
D_q:=q\frac{d}{dq}.
\]
Then \(D_q^n(q^j)=j^nq^j\), while
\((D_qf)(1)=f'(1)\).

Following Viro's terminology
\cite[Subsection~5.1]{Viro1996}, define
\[
M_r(C)
:=
\int_{\mathbb R^2\setminus\widetilde C_o}
\bigl(\ind_{\widetilde C_o}(x)\bigr)^r\,d\chi(x)
\]
and call \(M_r(C)\) the \(r\)-th \emph{index moment}.
Since
\[
D_q^r(q^j)=j^rq^j,
\]
we have
\[
\begin{aligned}
\left.
D_q^rP_C(q)
\right|_{q=1}
&=
\sum_{j\in\mathbb Z}
j^r\chi\bigl(R_j(C,o)\bigr)\\
&=
M_r(C).
\end{aligned}
\]
In particular, writing \(\rot(C)\) for the rotation number of \(C\),
\[
\rot(C)
=
\left.D_qP_C(q)\right|_{q=1}
\]
and
\begin{equation*}
J^{-}(C)
=
1-
\left.D_q^2P_C(q)\right|_{q=1}.
\tag{I2}\label{eq:Viro-Jminus}
\end{equation*}

We next recall the quantized-curvature invariant of Lanzat and Polyak
\cite{LanzatPolyak2013}.  Let \(\ind_C\) denote the extension of the
Alexander index to a regular point of $C$ obtained by averaging the indices
of the two adjacent regions, and to a double point by averaging those of the
four adjacent regions.  For each double point
\[
v=C(t_1)=C(t_2)\in V(C),
\]
let $\theta_v\in(0,\pi)$ be the non-oriented angle between
$C'(t_1)$ and $-C'(t_2)$.  Lanzat and Polyak defined
\[
I_q(C)
=
\frac{1}{2\pi}
\left(
\int_{S^1}
\kappa(t)\,
q^{\ind_C(C(t))}\,dt
-
\sum_{v\in V(C)}
\theta_v\,
q^{\ind_C(v)}
\left(q^{1/2}-q^{-1/2}\right)
\right).
\]
Thus \(I_q(C)\in\mathbb R[q^{1/2},q^{-1/2}]\).
Here \(\kappa(t)\) is the signed curvature of the parametrized curve.
We write \(\ind_o\) in place of \(\ind_C\) when emphasizing the chosen
orientation.

Write
\[
\widetilde C_o
=
\coprod_{\latestchange{\ell}} \widetilde C_{o,\latestchange{\ell}}.
\]
The averaged Alexander index is constant on each $\widetilde C_{o,\latestchange{\ell}}$.  Hence, by Hopf's Umlaufsatz \cite{Hopf1935},
\begin{equation*}
I_q(\widetilde C_o)
=
\sum_{\latestchange{\ell}}
\rot_o(\widetilde C_{o,\latestchange{\ell}})
q^{\ind_o(\widetilde C_{o,\latestchange{\ell}})}.
\tag{I4}\label{eq:LP-smoothed}
\end{equation*}
Moreover,
\begin{equation*}
I_q(C)
=
I_q(\widetilde C_o)
-
\frac12
\sum_{v\in V(C)}
q^{\ind_o(v)}
\left(q^{1/2}-q^{-1/2}\right).
\tag{I3}\label{eq:LP-smoothing-relation}
\end{equation*}

The Lanzat--Polyak comparison identity used below is
\begin{equation*}
P_C(q)=1+\left(q^{1/2}-q^{-1/2}\right)I_q(C)
+\frac12\sum_{v\in V(C)}q^{\ind_o(v)}
\left(q^{1/2}-q^{-1/2}\right)^2.
\tag{I5}\label{eq:LP-comparison}
\end{equation*}

Orientation reversal gives
\[
P_{-C}(q)=P_C(q^{-1})
\]
and
\[
I_q(-C)=-I_{q^{-1}}(C)
\]
\cite{LanzatPolyak2013}.  In particular,
\[
I_q(\widetilde C_{-o})
=
-I_{q^{-1}}(\widetilde C_o).
\]
Thus the quantity obtained from $P_C(q)$ by applying $D_q^2$ and
evaluating at $q=1$ is unchanged by orientation reversal.  For the
Lanzat--Polyak smoothing term, the quantities obtained by applying
$D_q^n$ at $q=1$ are preserved for odd $n$ and change sign for even
$n$.

\section{Homological refinements of the Viro and
Lanzat--Polyak quantizations}
\label{sec:homological-refinements}

Fix an orientation \(o\) of \(C\).  We first construct the two principal
homological lifts, then prove their invariance, including the decomposition
by actual regions, in a single argument.

\subsection{Region homology}
\label{subsec:region}

For each \(j\in\mathbb Z\), the index level \(R_j(C,o)\) is a finite
disjoint union of connected planar regions.  Its singular homology gives
the bigraded group in Part~{\rm (\ref{thm:main-region})} of
Theorem~\ref{thm:main}.  We call it the \emph{region homology} of
\((C,o)\).

\begin{proposition}[Region direct-sum decomposition and decategorification]
\label{prop:region-decategorification}
\purplechange{The region homology has the canonical decomposition}
\begin{equation*}
{\color{revisionpurple}
\mathcal H_{\mathrm{region}}^{i,j}(C,o)
\cong
\bigoplus_{R\in\pi_0(R_j(C,o))}H_i(R;\mathbb Z),
\tag{1}}
\end{equation*}
\purplechange{and its graded Euler characteristic is}
\begin{equation*}
{\color{revisionpurple}
\operatorname{grank}_{t,q}\mathcal H_{\mathrm{region}}(C,o)
\big|_{t=-1}=P_C(q).
\tag{2}}
\end{equation*}
\purplechange{For every \(n\ge0\),}
\begin{equation}
\label{eq:region-moments}
\left.
D_q^n
\left(
\operatorname{grank}_{t,q}\mathcal H_{\mathrm{region}}(C,o)
\big|_{t=-1}
\right)
\right|_{q=1}
=
\sum_{i,j}(-1)^ij^n
\operatorname{rank}\mathcal H_{\mathrm{region}}^{i,j}(C,o).
\end{equation}
The left-hand side is Viro's \(n\)-th index moment; in particular, \(n=2\)
gives the formula for \(J^{-}\) in Theorem~\ref{thm:main}.
\end{proposition}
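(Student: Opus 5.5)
The plan is to prove the three assertions in order, since each feeds the next. For the decomposition (1), I would use that the level set \(R_j(C,o)\) is an open subset of the plane. It is a union of complementary regions of the compact 1-manifold \(\widetilde C_o\). Its path components are exactly the regions \(R\) with \(\ind_{\widetilde C_o}(R)=j\), and each such region is open. Singular homology is additive over path components (every singular simplex has connected image and so lies in a single component). This gives a canonical isomorphism \(H_i(R_j(C,o);\mathbb Z)\cong\bigoplus_R H_i(R;\mathbb Z)\), induced by the inclusions \(R\hookrightarrow R_j(C,o)\). It is indexed by the actual regions, not merely up to abstract isomorphism. Finiteness of \(\pi_0\) follows because \(\widetilde C_o\) has finitely many circles, so there are finitely many regions.

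Next I would show that each region has finitely generated homology, concentrated in degrees \(0\) and \(1\). A region \(R\) is a connected open planar domain whose boundary is a finite union of disjoint smooth circles from \(\widetilde C_o\). Applying the Jordan--Schoenflies theorem repeatedly (or taking a collar), \(R\) deformation retracts onto a compact disk with holes, or, for the unbounded region, onto a closed annulus-with-holes complement. Hence \(H_0(R)\cong\mathbb Z\) and \(H_1(R)\cong\mathbb Z^{b(R)}\), where \(b(R)\) is the number of bounded boundary circles: namely \(\#\pi_0(\partial R)-1\) for a bounded region and \(\#\pi_0(\partial R)\) for the unbounded one. All other groups vanish. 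Consequently \(\chi(R)=\sum_i(-1)^i\operatorname{rank}H_i(R)\). Summing over components gives \(\chi(R_j)=\sum_i(-1)^i\operatorname{rank}\mathcal H^{i,j}_{\mathrm{region}}\), with only finitely many \(j\) contributing.

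The Euler characteristic statement then follows by definition of the graded rank. We have \(\operatorname{grank}_{t,q}=\sum_{i,j}\operatorname{rank}\mathcal H^{i,j}t^iq^j\), so setting \(t=-1\) gives \(\sum_j\chi(R_j(C,o))q^j\), which is \(P_C(q)\) by (I1). The \(\chi\) there agrees with the Euler integration \(d\chi\) used by Viro, since the two coincide on these tame level sets. For \eqref{eq:region-moments}, I would apply \(D_q^n\) termwise to this Laurent polynomial using \(D_q^n q^j=j^nq^j\), then evaluate at \(q=1\). Since the specialization \(t=-1\) commutes with \(D_q\), this gives \(\sum_{i,j}(-1)^ij^n\operatorname{rank}\mathcal H^{i,j}\), which is \(M_n(C)\). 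The case \(n=2\) combined with (I2) gives the \(J^-\) formula.

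The only step needing care is the second one: the topological identification of each region up to homotopy, which makes the Euler characteristic of a noncompact open set agree with its alternating rank sum. I would handle it by choosing a small tubular neighbourhood of \(\widetilde C_o\) and taking the compact complement. Everything else is formal.
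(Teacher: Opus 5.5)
Your proposal is correct and follows the paper's proof. The singular chains of \(R_j(C,o)\) split over its path components, which are the actual regions; setting \(t=-1\) gives \(\sum_j\chi(R_j(C,o))q^j=P_C(q)\) by (I1); and \(D_q^n\) is applied termwise using \(D_q^n(q^j)=j^nq^j\). Your extra check that each region has finitely generated homology concentrated in degrees \(0\) and \(1\) justifies two things the paper uses only tacitly: that the alternating rank sum equals \(\chi\), and that the graded rank is a Laurent polynomial.
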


\bluechange{The complete verification is given in
Section~\ref{proof:region-decategorification}.}

\subsection{Smoothing-circle homology and boundary regions}
\label{subsec:circle}

Each oriented component \(K\subset\widetilde C_o\)
determines its fundamental class
\[
[K]\in H_1(K;\mathbb Z),
\]
its averaged Alexander index
\(\ind_o(K)\in\frac12\mathbb Z\), and its rotation sign
\(\rot_o(K)\in\{1,-1\}\).  Placing \([K]\) in homological degree \(1\)
and using \(\varepsilon(K)=(1+\rot_o(K))/2\) gives the group in
Part~{\rm (\ref{thm:main-circle})}.  We call it the
\emph{smoothing-circle homology} of \((C,o)\).

\begin{proposition}[Circle graded rank and sign specialization]
\label{prop:circle-decategorification}
The full graded-rank polynomial is
\begin{equation*}
\operatorname{grank}_{t,q,u}\mathcal H_{\mathrm{circle}}(C,o)
=
\sum_{K\in\pi_0(\widetilde C_o)}
tq^{\ind_o(K)}u^{\varepsilon(K)}.
\end{equation*}
\purplechange{Its sign specialization is}
\begin{equation*}
{\color{revisionpurple}
\operatorname{grank}_{t,q,u}\mathcal H_{\mathrm{circle}}(C,o)
\big|_{t=u=-1}=I_q(\widetilde C_o).
\tag{3}}
\end{equation*}
\purplechange{More generally, for every \(n\ge0\),}
\begin{equation}
\label{eq:circle-moments}
\left.
D_q^n
\left(
\operatorname{grank}_{t,q,u}\mathcal H_{\mathrm{circle}}(C,o)
\big|_{t=u=-1}
\right)
\right|_{q=1}
=
\sum_{K\in\pi_0(\widetilde C_o)}
\rot_o(K)\bigl(\ind_o(K)\bigr)^n.
\end{equation}
\end{proposition}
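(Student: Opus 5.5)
The statement to prove is Proposition~\ref{prop:circle-decategorification}. Since \(\mathcal H_{\mathrm{circle}}(C,o)\) is defined directly as a free abelian group with one generator \([K]\) per smoothing circle, the argument is a bookkeeping computation followed by two elementary specializations. No topological input beyond the definitions and formula~\eqref{eq:LP-smoothed} is needed.

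First, the graded rank. By definition, \(\mathcal H_{\mathrm{circle}}^{i,j,\varepsilon}(C,o)\) vanishes for \(i\neq1\). For \(i=1\) it is free on the classes \([K]\) with \(\ind_o(K)=j\) and \(\varepsilon(K)=\varepsilon\). Each \([K]\) generates \(H_1(K;\mathbb Z)\cong\mathbb Z\), so the generators are linearly independent and the rank in tridegree \((1,j,\varepsilon)\) is the number of such circles. Summing \(\operatorname{rank}\cdot t^iq^ju^\varepsilon\) over all tridegrees regroups into one monomial \(tq^{\ind_o(K)}u^{\varepsilon(K)}\) per circle, which is the first formula. Finiteness of \(\pi_0(\widetilde C_o)\), which holds because \(C\) is generic with finitely many double points, makes this a Laurent polynomial in \(q^{1/2}\).

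Next, the sign specialization. Substitute \(t=u=-1\). Since \(\varepsilon(K)=(1+\rot_o(K))/2\), we get \(-(-1)^{\varepsilon(K)}=1\) when \(\rot_o(K)=1\) and \(-1\) when \(\rot_o(K)=-1\). Hence \(-(-1)^{\varepsilon(K)}=\rot_o(K)\), and the specialization equals \(\sum_K\rot_o(K)q^{\ind_o(K)}\). By \eqref{eq:LP-smoothed}, which rests on Hopf's Umlaufsatz and the constancy of the averaged index on each smoothing circle, this is \(I_q(\widetilde C_o)\). This gives (3).

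Finally, the moments. Apply \(D_q^n\) termwise, using \(D_q^n(q^{a})=a^nq^{a}\). This identity holds for half-integer exponents \(a\in\frac12\mathbb Z\), since \(D_q\) acts on \(q^a\) by multiplication by \(a\) for any real \(a\). Evaluating at \(q=1\) gives \(\sum_K\rot_o(K)\ind_o(K)^n\), which is \eqref{eq:circle-moments}.

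The only point needing care, and the likely "obstacle", is that \(\ind_o(K)\) may be half-integral. One must therefore work in \(\mathbb Z[q^{1/2},q^{-1/2}]\) and confirm that \(D_q\) is well defined there. It is: \(D_q\) is the derivation determined by \(D_q(q^{1/2})=\tfrac12q^{1/2}\), consistent with \(D_q(q)=q\). Everything else is formal.
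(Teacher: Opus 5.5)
Your proposal is correct and follows the paper's proof essentially step for step: one monomial $tq^{\ind_o(K)}u^{\varepsilon(K)}$ per generator $[K]$, then the sign check $(-1)^{1+\varepsilon(K)}=\rot_o(K)$ combined with (I4), then $D_q^n$ applied term by term. One small precision: since $D_q(q^{1/2})=\tfrac12q^{1/2}$, the operator $D_q$ acts on $\mathbb Q[q^{1/2},q^{-1/2}]$ and does not preserve $\mathbb Z[q^{1/2},q^{-1/2}]$, but this does not affect the argument.
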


\bluechange{The sign check and the moment calculation are given in
Section~\ref{proof:circle-decategorification}.}

\subsection{Invariance of the homological lifts}
\label{subsec:component-invariance}

\begin{lemma}[Invariance of the smoothed configuration]
\label{lem:smoothed-configuration-invariance}
Let \((C,o)\) and \((C',o')\) be related by a direct self-tangency
perestroika or a weak triple-point perestroika, with the orientation
transported through the move.  Then there is an ambient isotopy
\[
F\colon\mathbb R^2\times[0,1]\longrightarrow\mathbb R^2,
\qquad F(x,t)=F_t(x),
\]
with \(F_0=\operatorname{id}_{\mathbb R^2}\) and
\(F_1(\widetilde C_o)=\widetilde C'_{o'}\).  It
preserves the Alexander indices of the complementary regions, the averaged
indices and rotation signs of the smoothing circles, and the boundary
relation \(K\subset\partial R\).
\end{lemma}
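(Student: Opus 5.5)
The proof is local.  A perestroika is supported in a small disk \(D\) that meets \(C\) only in the two or three arcs taking part in the move.  Outside \(D\) the curves \(C\) and \(C'\) coincide, and so do their smoothings.  It therefore suffices to show that the oriented smoothings \(\widetilde C_o\cap D\) and \(\widetilde C'_{o'}\cap D\) are isotopic rel \(\partial D\) as oriented tangles.  We then extend such an isotopy by the identity outside a slightly larger disk to get \(F\).  Once \(F\) exists, the invariance statements follow formally:
\begin{itemize}
\item \(F_1\) maps regions to regions and circles to circles.
\item It preserves boundary incidence \(K\subset\partial R\).
\item The Alexander index is the winding number of the oriented smoothing, so it is preserved by any isotopy carrying one oriented smoothing onto the other.
\item The averaged index of a circle is the mean of the indices of its two adjacent regions, so it is preserved as well.
\item The rotation sign of an embedded oriented circle is \(\pm1\), according to whether it bounds its disk counterclockwise or clockwise.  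An ambient isotopy of the plane is orientation preserving and preserves this.
\end{itemize}

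For a direct self-tangency, the two arcs in \(D\) are tangent at the moment of the move and have the same direction there.  Before the move they are disjoint; after it they cross in two double points \(v_1,v_2\) bounding a bigon.  Smoothing both crossings compatibly with the orientation reconnects each incoming strand to the outgoing strand on the same side.  Because the directions agree, the result is again two parallel oriented arcs, plus possibly a small circle around the bigon.  I will check with a local picture that no circle appears: both boundary edges of the bigon run from \(v_1\) to \(v_2\), so they are not a cycle and cannot form a closed smoothing component.  The smoothing is therefore two disjoint arcs isotopic rel endpoints to the original pair.  This case is the reason the direct/inverse distinction matters.  In the inverse case the bigon edges are oppositely oriented, and the smoothing produces an extra circle or a reconnection.

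For a weak triple-point perestroika, three oriented arcs form a small triangle that collapses and reappears.  Weak means the orientations do not make the triangle's boundary a cyclically oriented cycle.  The plan is to describe the oriented smoothing at each of the three crossings and follow the six endpoints on \(\partial D\) through the resulting strands.  I expect this to be the main obstacle.  It requires a careful case enumeration of the orientation patterns up to symmetry, and for the weak type the triangle edges never close into a cycle.  In each case I will show that the smoothing consists of three embedded arcs with no closed component.  Because there is no closed component, the resulting planar matching of boundary points is determined by orientation compatibility (incoming endpoints to outgoing ones) together with the constraint that the arcs are embedded and disjoint.  I will then check that this matching is the same before and after the move.  Since an oriented crossingless planar tangle in a disk is determined up to isotopy rel boundary by its matching, the two smoothings are isotopic rel \(\partial D\), and this gives \(F\).
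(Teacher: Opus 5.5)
Your outline follows the paper's proof. Localize to a disk \(D\), check that the smoothed tangles in \(D\) before and after the move pair the points of \(\partial D\) in the same way, push the arcs across \(D\) rel a collar, and extend by the identity. Indices, rotation signs and the relations \(K\subset\partial R\) are then preserved by the orientation-preserving isotopy. You supply more than the paper does: the paper only asserts that the pairing is unchanged and never rules out closed smoothing components inside \(D\). Your directed-cycle observation fills that gap. In the self-tangency case both bigon edges run \(v_1\to v_2\), and in the triple-point case a non-cyclically oriented triangle contains no directed cycle.

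One justification is wrong as stated, though the conclusion survives. Having no closed component does not by itself make the non-crossing matching of incoming to outgoing endpoints unique. If the six endpoints alternate in/out around \(\partial D\), every non-crossing perfect matching pairs an incoming end with an outgoing one, and there are five of them. Alternation is exactly what happens for a cyclically oriented triangle. In that case the move really does switch matchings: before and after one sees a small circle plus three turnbacks, but the turnbacks are re-paired.

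The real reason the weak case works is the boundary pattern. List the six ends of the three branches around \(\partial D\) as \(L_1,L_2,L_3,L_1,L_2,L_3\). The triangle is non-cyclic exactly when the in/out labels do not alternate, and then they read as three consecutive incoming ends followed by three consecutive outgoing ones. For that pattern the nested matching is the only non-crossing matching of incoming to outgoing ends. The pattern is fixed by the curve near \(\partial D\), so it is the same before and after the move. Together with the absence of closed components, this gives the equality of the two smoothings in \(D\) at once and makes your planned case enumeration unnecessary.

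The same remark explains the self-tangency dichotomy. A direct move gives the pattern in, in, out, out, whose matching is forced. An inverse move gives an alternating pattern, and the move switches between its two matchings.
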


\bluechange{Its proof is given in
Section~\ref{proof:smoothed-configuration-invariance}.}

\begin{proposition}[Invariance of the homological lifts]
\label{prop:component-refinement-invariance}
Under the hypotheses of
Lemma~\ref{lem:smoothed-configuration-invariance}, for every \(i,j\) the
final homeomorphism \(F_1\) induces an isomorphism
\[
\bigoplus_{R\in\pi_0(R_j(C,o))}H_i(R;\mathbb Z)
\xrightarrow{\ \bigoplus_R(F_1|_R)_*\ }
\bigoplus_{R'\in\pi_0(R_j(C',o'))}H_i(R';\mathbb Z),
\]
where the summand indexed by \(R\) is sent to the summand indexed by
\(F_1(R)\).  This is the isomorphism
\[
\mathcal H_{\mathrm{region}}^{i,j}(C,o)
\cong
\mathcal H_{\mathrm{region}}^{i,j}(C',o').
\]
On the circle homology, \(F_1\) sends
\[
[K]\longmapsto[F_1(K)]
\]
and gives a trigraded isomorphism.  Moreover,
\[
K\subset\partial R
\quad\Longleftrightarrow\quad
F_1(K)\subset\partial F_1(R).
\]
\end{proposition}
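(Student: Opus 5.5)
The proof is a direct consequence of Lemma~\ref{lem:smoothed-configuration-invariance}; all that remains is to track how the final homeomorphism \(F_1\) acts on components and on homology.

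\textbf{Step 1: regions go to regions.} Since \(F_1\) is a homeomorphism of \(\mathbb R^2\) with \(F_1(\widetilde C_o)=\widetilde C'_{o'}\), it restricts to a homeomorphism
\[
\mathbb R^2\setminus\widetilde C_o\to\mathbb R^2\setminus\widetilde C'_{o'}.
\]
Hence it induces a bijection \(\pi_0(\mathbb R^2\setminus\widetilde C_o)\to\pi_0(\mathbb R^2\setminus\widetilde C'_{o'})\), \(R\mapsto F_1(R)\). By the lemma, Alexander indices are preserved. So this bijection restricts, for each \(j\), to a bijection \(\pi_0(R_j(C,o))\to\pi_0(R_j(C',o'))\), and \(F_1\) maps the level set \(R_j(C,o)\) homeomorphically onto \(R_j(C',o')\).

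\textbf{Step 2: the region isomorphism respects summands.} Each restriction \(F_1|_R\colon R\to F_1(R)\) is a homeomorphism, so \((F_1|_R)_*\) is an isomorphism on \(H_i\). Summing over the bijection of Step~1 gives an isomorphism of the direct sums that sends the summand indexed by \(R\) to the one indexed by \(F_1(R)\).

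Naturality of the decomposition (1) of Proposition~\ref{prop:region-decategorification} then identifies this map with \((F_1|_{R_j(C,o)})_*\). Concretely, the decomposition is induced by the inclusions \(R\hookrightarrow R_j(C,o)\), and \(F_1\) commutes with these inclusions. This gives the stated isomorphism \(\mathcal H_{\mathrm{region}}^{i,j}(C,o)\cong\mathcal H_{\mathrm{region}}^{i,j}(C',o')\).

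\textbf{Step 3: the circle homology.} \(F_1\) restricts to a homeomorphism \(\widetilde C_o\to\widetilde C'_{o'}\), giving a bijection \(K\mapsto F_1(K)\) on components.

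The main point to check is that \((F_1|_K)_*[K]=[F_1(K)]\) with the transported orientation, and not its negative. This holds because \(F_1\) is the endpoint of an isotopy through orientation-preserving homeomorphisms starting at the identity, and the orientation \(o'\) is by definition transported through the move. Thus the oriented circle \(K\) is carried to the oriented circle \(F_1(K)\).

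By the lemma, \(\ind_o\) and \(\rot_o\) are preserved, hence so is \(\varepsilon\). The homological degree is \(1\) on both sides. So \([K]\mapsto[F_1(K)]\) is a bijection of bases that preserves all three gradings, and therefore a trigraded isomorphism.

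\textbf{Step 4: the boundary relation.} A homeomorphism of \(\mathbb R^2\) commutes with closure and interior, so \(F_1(\partial R)=\partial F_1(R)\). Since \(F_1\) is a bijection, \(K\subset\partial R\) holds if and only if \(F_1(K)\subset F_1(\partial R)=\partial F_1(R)\). (The lemma asserts this as well.)

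The only step needing care is the orientation bookkeeping in Step~3. Everything else is functoriality of singular homology under homeomorphisms.
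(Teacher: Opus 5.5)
Your proposal is correct and follows essentially the same route as the paper: the lemma supplies the index-preserving bijection \(R\mapsto F_1(R)\), the restricted homeomorphisms \(F_1|_R\) give the summandwise isomorphisms on \(H_i\), and \([K]\mapsto[F_1(K)]\) is a bijection of bases preserving all three gradings. Your extra remarks make explicit what the paper's short proof and the lemma leave implicit: the naturality with respect to the inclusions \(R\hookrightarrow R_j(C,o)\), the orientation compatibility on each circle, and the identity \(F_1(\partial R)=\partial F_1(R)\) obtained because \(F_1\) commutes with closure and interior.
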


\bluechange{Its proof is given in
Section~\ref{proof:homological-invariance}.}

\purplechange{Propositions~\ref{prop:region-decategorification} and
\ref{prop:component-refinement-invariance} prove
Part~{\rm (\ref{thm:main-region})} of Theorem~\ref{thm:main}.}

\subsection{Comparison of the two quantizations}
\label{subsec:comparison}

\begin{proposition}
\label{prop:comparison}
The region and smoothing-circle decategorifications satisfy
\begin{equation*}
\rthreechange{\adjustbox{max width=0.96\linewidth}{$\displaystyle
\operatorname{grank}_{t,q}\mathcal H_{\mathrm{region}}(C,o)\big|_{t=-1}
=1+\left(q^{1/2}-q^{-1/2}\right)\operatorname{grank}_{t,q,u}\mathcal H_{\mathrm{circle}}(C,o)\big|_{t=u=-1}. $}}
\tag{5}
\end{equation*}
\purplechange{Equivalently,}
\begin{equation*}
{\color{revisionpurple}
P_C(q)=1+\left(q^{1/2}-q^{-1/2}\right)I_q(\widetilde C_o).}
\tag{6}
\end{equation*}
\end{proposition}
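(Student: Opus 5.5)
The plan is to reduce (5) to the classical identity (6) by invoking the two decategorification statements already established, and then to prove (6) directly. By Proposition~\ref{prop:region-decategorification}, equation (2), the left-hand side of (5) equals \(P_C(q)\). By Proposition~\ref{prop:circle-decategorification}, equation (3), the specialization of the circle graded rank at \(t=u=-1\) equals \(I_q(\widetilde C_o)\). So (5) and (6) are literally the same identity, and it remains to prove (6).

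For (6) I would use the Lanzat--Polyak comparison identity (I5) together with the smoothing relation (I3). Substituting (I3) into (I5) turns the term \((q^{1/2}-q^{-1/2})I_q(C)\) into
\[
\left(q^{1/2}-q^{-1/2}\right)I_q(\widetilde C_o)-\frac12\sum_{v\in V(C)}q^{\ind_o(v)}\left(q^{1/2}-q^{-1/2}\right)^2 .
\]
The vertex sum here cancels exactly against the last term of (I5). This leaves \(P_C(q)=1+(q^{1/2}-q^{-1/2})I_q(\widetilde C_o)\), which is (6).

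As an independent check that does not rely on (I5), one can argue directly on the smoothed configuration \(\widetilde C_o\), which is a disjoint union of embedded circles. Each region \(R\) has \(\chi(R)=1-(\#\pi_0(\partial R)-1)\) when bounded, and the unbounded region has \(\chi=1-\#\pi_0(\partial R)\), where \(\partial R\) counts circles. Hence
\[
P_C(q)=q^{0}+\sum_{R}q^{\ind(R)}-\sum_{K}\left(q^{\ind(R_{\mathrm{in}}(K))}\right),
\]
where the unbounded region has index \(0\) and \(R_{\mathrm{in}}(K)\) is the region adjacent to \(K\) on its inner side, so that each bounded region is counted once as an inner region. Indeed, for bounded \(R\) the term \(1-\#\text{holes}\) assigns \(+1\) to \(R\) and \(-1\) to each circle that is the outer boundary of a region inside \(R\). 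Regrouping gives \(P_C(q)=1+\sum_K\bigl(q^{\ind(\mathrm{inside})}-q^{\ind(\mathrm{outside})}\bigr)\). Across \(K\) the index changes by \(\pm1\) according to whether \(K\) is counterclockwise or clockwise, that is, according to \(\rot_o(K)\). Writing the inside and outside indices as \(\ind_o(K)\pm\tfrac12\rot_o(K)\), each summand equals \(\rot_o(K)q^{\ind_o(K)}(q^{1/2}-q^{-1/2})\). Summing and applying (I4) gives (6).

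The main obstacle is the sign and orientation bookkeeping in this direct check. One must confirm that the counterclockwise orientation corresponds to \(\rot_o(K)=+1\) and to the inside index being one larger. If the citation route through (I5) and (I3) is accepted, the proof reduces to the one-line cancellation above, and I would present that as the proof, with the direct count as confirmation.
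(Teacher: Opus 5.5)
Your main argument is the paper's own proof: substituting (I3) into (I5) cancels the double-point terms to give (6), and (2) and (3) identify (5) with (6). Your direct count on \(\widetilde C_o\) is also a valid self-contained route to (6) that avoids (I5), with one correction. In the intermediate display the subtracted term should be \(q^{\ind(R_{\mathrm{out}}(K))}\), not \(q^{\ind(R_{\mathrm{in}}(K))}\): each circle lowers \(\chi\) of the region on its outer side. The bijection between bounded regions and circles, \(R=R_{\mathrm{in}}(K)\), enters only in the regrouping step. With that fix, your formula \(1+\sum_K(q^{\ind(\mathrm{inside})}-q^{\ind(\mathrm{outside})})\) and the sign analysis are correct.
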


\purplechange{Proposition~\ref{prop:comparison} proves
Part~{\rm (\ref{thm:main-comparison})} of Theorem~\ref{thm:main}.}

\begin{corollary}[The smoothing-circle formula for \(J^{-}\)]
\label{cor:circle-Jminus}
The smoothing-circle homology recovers \(J^{-}\) by
\begin{equation*}
\color{revisionpurple}
\begin{aligned}
J^{-}(C)
&=1-2\left.\frac{d}{dq}I_q(\widetilde C_o)\right|_{q=1}\\
&=1-2\left.\frac{d}{dq}
\left(
\operatorname{grank}_{t,q,u}\mathcal H_{\mathrm{circle}}(C,o)
\big|_{t=u=-1}
\right)\right|_{q=1}.
\end{aligned}
\tag{4}
\end{equation*}
\end{corollary}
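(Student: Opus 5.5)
The plan is to derive the corollary from Viro's formula \eqref{eq:Viro-Jminus} together with the comparison identity of Proposition~\ref{prop:comparison}, and then apply the sign specialization (3) of Proposition~\ref{prop:circle-decategorification}.

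Put \(G(q):=I_q(\widetilde C_o)\), a finite Laurent polynomial in \(q^{1/2}\), and \(h(q):=q^{1/2}-q^{-1/2}\). By (6), \(P_C(q)=1+h(q)G(q)\). Since \(D_q\) is a derivation killing constants,
\[
D_q^2P_C=(D_q^2h)G+2(D_qh)(D_qG)+h\,D_q^2G.
\]
Direct computation gives \(D_qh=\tfrac12(q^{1/2}+q^{-1/2})\) and \(D_q^2h=\tfrac14 h\). At \(q=1\) these give \(h(1)=0\), \(D_qh(1)=1\) and \(D_q^2h(1)=0\). Hence
\[
D_q^2P_C(q)|_{q=1}=2\,D_qG(q)|_{q=1}=2G'(1),
\]
using \((D_qf)(1)=f'(1)\). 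Substituting into \eqref{eq:Viro-Jminus} yields the first equality \(J^-(C)=1-2\,\frac{d}{dq}I_q(\widetilde C_o)|_{q=1}\).

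The second equality follows by replacing \(I_q(\widetilde C_o)\) with \(\operatorname{grank}_{t,q,u}\mathcal H_{\mathrm{circle}}(C,o)|_{t=u=-1}\), which (3) permits. Alternatively, one can read off \(G'(1)=\sum_K\rot_o(K)\ind_o(K)\) from \eqref{eq:circle-moments} with \(n=1\).

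The only point that needs care is that half-integer powers of \(q\) occur, so \(D_q\) has to be applied to \(q^{a}\) with \(a\in\frac12\mathbb Z\). The rule \(D_q q^a=aq^a\) still holds for such exponents, so the computation above goes through without change.
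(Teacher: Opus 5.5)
Your proposal is correct and is essentially the paper's own proof. You write \(P_C=1+(q^{1/2}-q^{-1/2})I_q(\widetilde C_o)\) via (6), and use \(h(1)=0\), \((D_qh)(1)=1\), \((D_q^2h)(1)=0\) in the twice-applied product rule to get \((D_q^2P_C)(1)=2\,\frac{d}{dq}I_q(\widetilde C_o)|_{q=1}\); then you substitute into (I2) and replace \(I_q(\widetilde C_o)\) by the specialized circle graded rank using (3), exactly as the paper does (your remark on half-integer exponents is a harmless extra check the paper leaves implicit).
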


\currentchange{The calculation proving this formula is given in
Section~\ref{proof:circle-Jminus}.}
\purplechange{Propositions~\ref{prop:circle-decategorification} and
\ref{prop:component-refinement-invariance}, together with
Corollary~\ref{cor:circle-Jminus}, prove
Part~{\rm (\ref{thm:main-circle})} of Theorem~\ref{thm:main}.}

\subsection{The double-point correction and the full
Lanzat--Polyak invariant}
\label{subsec:vertex}

For each \(v\in V(C)\), put \(j(v)=\ind_o(v)\).  Let
\(\mathcal V(C,o)\) be the zero-differential bigraded complex with
generators \(v_0,v_1\) in bidegrees
\[
\deg(v_0)=\left(0,j(v)+\frac12\right),
\qquad
\deg(v_1)=\left(1,j(v)-\frac12\right).
\]
Its graded-rank polynomial therefore satisfies
\begin{equation}
\label{eq:vertex-specialization}
\operatorname{grank}_{t,q}\mathcal V(C,o)\big|_{t=-1}
=\sum_{v\in V(C)}q^{j(v)}
\left(q^{1/2}-q^{-1/2}\right).
\end{equation}

\begin{proposition}
\label{prop:full-LP}
\purplechange{The circle homology and the vertex complex satisfy}
\begin{equation*}
{\color{revisionpurple}\adjustbox{max width=0.90\linewidth}{$\displaystyle
I_q(C)=\operatorname{grank}_{t,q,u}\mathcal H_{\mathrm{circle}}(C,o)\big|_{t=u=-1}
-\frac12\operatorname{grank}_{t,q}\mathcal V(C,o)\big|_{t=-1}.$}}
\tag{7}
\end{equation*}
\purplechange{Consequently,}
\begin{equation*}
{\color{revisionpurple}
J^{+}(C)=1-2\left.\frac{d}{dq}I_q(C)\right|_{q=1}.
\tag{7'}}
\end{equation*}
\end{proposition}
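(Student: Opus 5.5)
The first identity (7) follows by substitution.  By Proposition~\ref{prop:circle-decategorification}, the sign specialization of the circle graded rank equals \(I_q(\widetilde C_o)\).  By \eqref{eq:vertex-specialization}, the specialization \(t=-1\) of the vertex complex equals \(\sum_{v}q^{\ind_o(v)}(q^{1/2}-q^{-1/2})\).  The reason is that each \(v\) contributes \(v_0\) in bidegree \((0,j(v)+\tfrac12)\) with sign \(+1\) and \(v_1\) in bidegree \((1,j(v)-\tfrac12)\) with sign \(-1\).  Inserting both expressions into the Lanzat--Polyak smoothing relation \eqref{eq:LP-smoothing-relation} gives (7) at once.  So the real content is \eqref{eq:LP-smoothing-relation} itself, which the paper takes as a classical input from \cite{LanzatPolyak2013}.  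If one wants to justify it, I would argue locally: smoothing \(v\) replaces the crossing by two arcs with opposite turns of magnitude \(\beta_v=\pi-\theta_v\).  These arcs lie at averaged indices \(\ind_o(v)\pm\tfrac12\), so the curvature integral changes by \(\frac{\beta_v}{2\pi}(q^{\ind_o(v)+1/2}-q^{\ind_o(v)-1/2})\).  Adding the explicit vertex term \(-\frac{\theta_v}{2\pi}q^{\ind_o(v)}(q^{1/2}-q^{-1/2})\) leaves \(-\tfrac12 q^{\ind_o(v)}(q^{1/2}-q^{-1/2})\) per vertex.

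For (7'), I differentiate (7) at \(q=1\).  Corollary~\ref{cor:circle-Jminus} gives
\[
1-2\left.\frac{d}{dq}I_q(\widetilde C_o)\right|_{q=1}=J^-(C).
\]
For each vertex, \(\frac{d}{dq}\bigl[q^{j}(q^{1/2}-q^{-1/2})\bigr]_{q=1}=1\), because the factor \(q^{1/2}-q^{-1/2}\) vanishes at \(q=1\) and has derivative \(1\) there.  Hence
\[
1-2\left.\frac{d}{dq}I_q(C)\right|_{q=1}=J^-(C)+\#V(C).
\]
It remains to use the classical relation \(J^+(C)=J^-(C)+\#V(C)\), due to Arnold \cite{Arnold1994Book}.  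This is precisely the identification stated in (7'), and it is consistent with (21) of Part~(\ref{thm:main-unoriented}).

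The main obstacle is not the algebra, which is routine once (I3) and Corollary~\ref{cor:circle-Jminus} are granted.  It is the normalization.  I would check that Arnold's normalization of \(J^{\pm}\), in the form used via Viro in \eqref{eq:Viro-Jminus}, gives exactly \(J^+=J^-+n\) and not a shift by a constant.  To do this, I would evaluate both sides on Arnold's standard curves \(K_0\) (figure eight) and \(K_{i}\), \(i\ge1\), as Viro does.  For example, the circle \(K_1\) has \(P=q^{0}\cdot 1 + (\text{disk at index }1)\).  This gives \(J^-=1-1=0=J^+\) with \(n=0\), matching Arnold's values.
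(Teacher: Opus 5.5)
Your proof is correct and matches the paper's: (7) comes from inserting the circle specialization (3) and the vertex specialization (10) into the classical smoothing relation (I3), and (7') from differentiating at \(q=1\) with each vertex term contributing \(1\), then applying Corollary~\ref{cor:circle-Jminus} and Arnold's relation \(J^+=J^-+\#V(C)\). One harmless slip in your optional normalization check: the unbounded region of the circle \(K_1\) has Euler characteristic \(0\), so \(P_{K_1}(q)=q\), consistent with \(P_C(1)=1\) from (6); since \(D_q^2\) kills the \(q^0\) term, \(J^-(K_1)=0\) still holds.
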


\purplechange{The verification of both formulas is given in
Section~\ref{proof:full-LP}.}
\purplechange{Proposition~\ref{prop:full-LP} proves
Part~{\rm (\ref{thm:main-double-point})} of
Theorem~\ref{thm:main}.}

\subsection{The edge model behind the double-point correction}
\label{subsec:edge}

The vertex complex records the double-point correction algebraically.  The
edge construction below identifies the same two local terms with the signed
bends created by orientation-preserving smoothing, giving a homological
model for the local change of the quantized-curvature integral.

Regard \(C\) as an oriented four-valent plane graph whose vertex set
is \(V(C)\) and whose edges are the closures of the connected
components of \(C\setminus V(C)\).  For each \(v\in V(C)\), exactly
two oriented edges have terminal vertex \(v\).  In the
orientation-preserving coordinates shown in
Figure~\ref{fig:incoming-edges}, denote these globally determined
incoming edges by \(e_l^v\) and \(e_r^v\).

\begin{figure}[ht]
\begin{picture}(50,30)
\put(-2,0){$e^v_{l}$}
\put(50,0){$e^v_{r}$}
\put(26,30){$v$}
\put(8,8){\vector(1,1){18}}
\put(50,8){\vector(-1,1){18}}
\end{picture}
\caption{The two incoming edges at a double point.}
\label{fig:incoming-edges}
\end{figure}

Let
\[
E(C,o)
=
\coprod_{v\in V(C)}
\{e_l^v,e_r^v\}.
\]
When \(V(C)=\varnothing\), we set \(E(C,o)=\varnothing\).

Each incoming edge is adjacent to two complementary regions.  Its averaged Alexander index is the average of the Alexander indices
of those two regions.  Similarly, the averaged Alexander index of a
double point is the average of the Alexander indices of its four
adjacent regions \cite{Shumakovitch1996}.  With the local convention of
Figure~\ref{fig:incoming-edges},
\[
\ind_o(e_l^v)
=
j(v)+\frac12,
\qquad
\ind_o(e_r^v)
=
j(v)-\frac12.
\]

Let
\[
\mathbb E(C,o)
=
\mathbb Z\langle E(C,o)\rangle,
\qquad
\mathbb V(C)
=
\mathbb Z\langle V(C)\rangle.
\]
Consider the two-term chain complex
\[
0
\longrightarrow
\mathbb E(C,o)
\xrightarrow{\partial}
\mathbb V(C)
\longrightarrow
0,
\]
where
\[
\partial(e_l^v)=v,
\qquad
\partial(e_r^v)=v.
\]
Thus
\[
\partial(e_l^v-e_r^v)=0.
\]
{\color{currentbrown}
Generators corresponding to distinct double points belong to distinct
direct summands, and hence
\[
\ker\partial
=\bigoplus_{v\in V(C)}
\mathbb Z\langle e_l^v-e_r^v\rangle.
\]
Moreover, \(\partial\) is surjective.  Therefore, define
\[
\mathcal H_{\mathrm{edge}}(C,o)
:=
H_1\bigl(
\mathbb E(C,o)\xrightarrow{\partial}\mathbb V(C)
\bigr)
\cong
\bigoplus_{v\in V(C)}
\mathbb Z\langle e_l^v-e_r^v\rangle.
\]
On the other hand,
\[
H_0\bigl(
\mathbb E(C,o)\xrightarrow{\partial}\mathbb V(C)
\bigr)=0.
\]
}

For each double point \(v\), let
\[
z_v=[e_l^v-e_r^v]
\in
\mathcal H_{\mathrm{edge}}(C,o).
\]
The concentration of the edge complex in degrees \(1\) and \(0\) makes
\(e_l^v-e_r^v\) the unique cycle representative of \(z_v\).  Its two
terms carry the internal data
\[
\left(0,j(v)+\frac12\right)
\qquad\text{and}\qquad
\left(1,j(v)-\frac12\right),
\]
respectively.  The first entry is an internal sign label, independent
of the chain degree of the edge complex; these labels record the two
local terms carried by the canonical cycle.

The canonical basis above defines a signed \(q\)-character
\[
\operatorname{ch}^{\mathrm{edge}}_q
\colon
\mathcal H_{\mathrm{edge}}(C,o)
\longrightarrow
\mathbb Z[q^{1/2},q^{-1/2}],
\qquad
\operatorname{ch}^{\mathrm{edge}}_q(z_v)
=
q^{j(v)+1/2}-q^{j(v)-1/2}.
\]
This signed character is determined by the unique cycle representative of
each edge-homology class and evaluates its two internal labels with opposite
signs.

Set
\[
z_C:=\sum_{v\in V(C)}z_v.
\]
Then Equation~\eqref{eq:vertex-specialization} takes the
edge-to-vertex form
\[
\operatorname{ch}^{\mathrm{edge}}_q(z_C)
=
\operatorname{grank}_{t,q}\mathcal V(C,o)\big|_{t=-1}.
\]

For prescribed turning magnitudes
\(\beta=(\beta_v)_{v\in V(C)}\), define the
\emph{weighted signed \(q\)-character}
\[
\operatorname{ch}^{\mathrm{edge}}_{q,\beta}
\colon
\mathcal H_{\mathrm{edge}}(C,o)
\longrightarrow
\mathbb R[q^{1/2},q^{-1/2}]
\]
on the canonical basis by
\[
\operatorname{ch}^{\mathrm{edge}}_{q,\beta}(z_v)
:=
\frac{\beta_v}{2\pi}
\left(q^{j(v)+1/2}-q^{j(v)-1/2}\right).
\]

\begin{proposition}[Weighted signed \(q\)-character]
\label{prop:edge-curvature}
Let \((C,o)\) be a generic oriented one-component plane curve, and
let \(\mathcal H_{\mathrm{edge}}(C,o)\) be the edge homology
constructed from the two-term complex defined above in
Subsection~\ref{subsec:edge}.  \purplechange{It satisfies}
\[
{\color{revisionpurple}
H_1\cong\bigoplus_{v\in V(C)}
\mathbb Z\langle[e_l^v-e_r^v]\rangle,
\qquad H_0=0.}
\]
Suppose
that, at every \(v\in V(C)\), the orientation-preserving smoothing
produces a positive turning of magnitude \(\beta_v\) on the local arc
with averaged Alexander index \(j(v)+1/2\) and a negative turning of
the same magnitude on the local arc with averaged Alexander index
\(j(v)-1/2\).  Then the local change of the normalized quantized
curvature integral produced by smoothing \(v\) is
\[
\operatorname{ch}^{\mathrm{edge}}_{q,\beta}(z_v)
=
\frac{\beta_v}{2\pi}
\left(
q^{j(v)+1/2}-q^{j(v)-1/2}
\right).
\]
The total change produced by smoothing all double points is
\[
\operatorname{ch}^{\mathrm{edge}}_{q,\beta}(z_C)
=
\frac{1}{2\pi}
\sum_{v\in V(C)}
\beta_v
\left(
q^{j(v)+1/2}-q^{j(v)-1/2}
\right).
\]

\end{proposition}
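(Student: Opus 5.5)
The proof has two parts: an algebraic computation of the edge homology, and a local geometric computation of how smoothing one double point changes the normalized quantized-curvature integral.

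\textbf{Homology.} The complex \(\mathbb E(C,o)\xrightarrow{\partial}\mathbb V(C)\) splits as a direct sum over \(v\in V(C)\) of the local complexes
\[
\mathbb Z\langle e_l^v,e_r^v\rangle\longrightarrow\mathbb Z\langle v\rangle .
\]
Each local map sends \((a,b)\mapsto a+b\). It is surjective, so the local \(H_0\) vanishes. Its kernel is the rank-one free group spanned by \(e_l^v-e_r^v\), which is primitive. Summing over \(v\) gives the stated \(H_1\) and \(H_0=0\), exactly as recorded before the proposition. The case \(V(C)=\varnothing\) is the zero complex.

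\textbf{Local curvature change.} The plan is to work in a small disk \(U_v\) around \(v\) that meets \(C\) only in the two branches through \(v\), and to choose the smoothing supported in \(U_v\). Outside \(U_v\) the integrand \(\kappa(t)q^{\ind_C(C(t))}\,dt\) is unchanged. Inside \(U_v\), the orientation-preserving smoothing replaces the two crossing strands by two arcs.

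1. \emph{Indices of the arcs.} Each new arc lies between a pair of adjacent regions of index \(j(v)\pm\tfrac12\pm\tfrac12\). Its averaged index is therefore constant, namely \(j(v)+\tfrac12\) for one arc and \(j(v)-\tfrac12\) for the other. This matches the convention \(\ind_o(e_l^v)=j(v)+\tfrac12\) and \(\ind_o(e_r^v)=j(v)-\tfrac12\); the incoming edges \(e_l^v,e_r^v\) continue, through the smoothing, into these two arcs.

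2. \emph{Total turning of each arc.} By hypothesis, the total turning \(\int\kappa\,dt\) along the arc of index \(j(v)+\tfrac12\) exceeds that of the corresponding straight-through strand by \(+\beta_v\). The arc of index \(j(v)-\tfrac12\) changes by \(-\beta_v\).

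3. \emph{Contributions.} The original strands pass through the crossing, where the index jumps. We choose them nearly straight inside \(U_v\) (a \(C^1\)-small deformation). Their curvature contribution can then be made arbitrarily small, and in the limit we may regard them as contributing \(0\). On each new arc the index is constant, so the weighted contribution is (turning)\(\cdot q^{\text{index}}\).

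4. \emph{Normalization.} Dividing by \(2\pi\), the local change is
\[
\frac{1}{2\pi}\Bigl(\beta_v q^{j(v)+1/2}-\beta_v q^{j(v)-1/2}\Bigr)=\operatorname{ch}^{\mathrm{edge}}_{q,\beta}(z_v).
\]
The middle equality is just the definition of the weighted character on the canonical basis.

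\textbf{Total change.} The disks \(U_v\) can be chosen disjoint, so the local changes add. Linearity of \(\operatorname{ch}^{\mathrm{edge}}_{q,\beta}\) applied to \(z_C=\sum_v z_v\) gives the stated total.

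\textbf{Main obstacle.} The delicate step is steps 1–3: pairing each smoothed arc with the correct index \(j(v)\pm\tfrac12\) and the correct sign of turning, and justifying that the contribution of the unsmoothed strands in \(U_v\) vanishes (or cancels) despite the index jump at \(v\). The sign pairing is supplied by hypothesis, so only the index bookkeeping in Figure~\ref{fig:incoming-edges} and a limiting argument over shrinking \(U_v\) are needed. As a consistency check, I would verify that taking \(\beta_v=\pi-\theta_v\) makes the sum agree with \eqref{eq:LP-smoothing-relation} together with the \(\theta_v\)-term in the definition of \(I_q(C)\).
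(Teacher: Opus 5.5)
Your proposal is correct and follows the paper's proof. Both split \(\mathbb E(C,o)\xrightarrow{\partial}\mathbb V(C)\) vertex by vertex (kernel \(b=-a\), surjectivity, direct sum), then pair the arcs of averaged index \(j(v)\pm\tfrac12\) with turnings \(\pm\beta_v\), divide by \(2\pi\), and sum over \(v\); the paper reads these contributions directly off the hypothesis. Your limiting argument over shrinking \(U_v\) is harmless but unnecessary: each half-strand at \(v\) already carries the index of the smoothed arc it joins (\(e_l^v\) and the outgoing half-strand on the same side both have index \(j(v)+\tfrac12\), and likewise for \(j(v)-\tfrac12\)), so the original strands' contribution cancels exactly despite the index jump at \(v\).
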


\bluechange{Its local-weight calculation is given in
Section~\ref{proof:edge}.}

\purplechange{Proposition~\ref{prop:edge-curvature} proves
Part~{\rm (\ref{thm:main-edge})} of Theorem~\ref{thm:main}.
}

\subsection{The canonical two-state homology for an unoriented curve}
\label{subsec:two-state}

We now regard \(C\) as an unoriented one-component curve.  Choose
temporarily one of its two orientations \(o\).  Combining the
smoothing-circle homologies associated with the two orientations,
define
\[
\mathcal H_{\mathrm{circle}}^{\mathrm{un}}(C)
=
\mathcal H_{\mathrm{circle}}(C,o)
\oplus
\mathcal H_{\mathrm{circle}}(C,-o).
\]
Its triply graded components are defined by
\[
\mathcal H_{\mathrm{circle}}^{\mathrm{un},i,j,\varepsilon}(C)
:=
\mathcal H_{\mathrm{circle}}^{i,j,\varepsilon}(C,o)
\oplus
\mathcal H_{\mathrm{circle}}^{i,j,\varepsilon}(C,-o).
\]
Thus the superscript \((i,j,\varepsilon)\) on the left denotes the
same displayed degree in each orientation-dependent summand.
Orientation reversal gives
\[
\ind_{-o}(\widetilde C_{-o,\latestchange{\ell}})
=
-\ind_o(\widetilde C_{o,\latestchange{\ell}}),
\qquad
\rot_{-o}(\widetilde C_{-o,\latestchange{\ell}})
=
-\rot_o(\widetilde C_{o,\latestchange{\ell}}),
\]
and therefore
\[
\varepsilon_{-o}(\widetilde C_{-o,\latestchange{\ell}})
=
1-\varepsilon_o(\widetilde C_{o,\latestchange{\ell}}).
\]
Consequently, orientation reversal canonically identifies
\[
\mathcal H_{\mathrm{circle}}^{i,j,\varepsilon}(C,o)
\cong
\mathcal H_{\mathrm{circle}}^{i,-j,1-\varepsilon}(C,-o).
\]
It induces an involution \(\tau\) on the total two-state homology with
\[
\tau\bigl(
\mathcal H_{\mathrm{circle}}^{\mathrm{un},i,j,\varepsilon}(C)
\bigr)
=
\mathcal H_{\mathrm{circle}}^{\mathrm{un},i,-j,1-\varepsilon}(C).
\]
Changing the temporary reference orientation merely reverses the
displayed order of the two factors in each fixed
\((i,j,\varepsilon)\)-component.  Hence the triply graded group is
independent of that choice.

Define the triply graded-rank polynomial of the two-state homology by
\begin{equation}
\label{eq:two-state-circle-sum}
\begin{aligned}
\operatorname{grank}_{t,q,u}
\mathcal H_{\mathrm{circle}}^{\mathrm{un}}(C)
:={}&
\operatorname{grank}_{t,q,u}\mathcal H_{\mathrm{circle}}(C,o)
+
\operatorname{grank}_{t,q,u}\mathcal H_{\mathrm{circle}}(C,-o).
\end{aligned}
\end{equation}
Using either orientation \(o\) as a reference, this becomes
\[
\operatorname{grank}_{t,q,u}\mathcal H_{\mathrm{circle}}^{\mathrm{un}}(C)
=
\sum_{i,j,\varepsilon}
\operatorname{rank}\mathcal H_{\mathrm{circle}}^{i,j,\varepsilon}
(C,o)t^i
\left(q^ju^\varepsilon+q^{-j}u^{1-\varepsilon}\right).
\]

{\color{lastbrown}
Put
\[
\begin{aligned}
G_o(q)&:=
\left.\operatorname{grank}_{t,q,u}
\mathcal H_{\mathrm{circle}}(C,o)\right|_{t=u=-1},\\
G_{\mathrm{un}}(q)&:=
\left.\operatorname{grank}_{t,q,u}
\mathcal H_{\mathrm{circle}}^{\mathrm{un}}(C)\right|_{t=u=-1}.
\end{aligned}
\]
Orientation reversal gives
\(G_{\mathrm{un}}(q)=G_o(q)-G_o(q^{-1})\), and hence, for every
\(m\geq0\),
\[
\left.D_q^mG_{\mathrm{un}}(q)\right|_{q=1}
=\bigl(1-(-1)^m\bigr)
\left.D_q^mG_o(q)\right|_{q=1}.
\]
}

\begin{proposition}[The canonical two-state homology]
\label{prop:two-state}
\purplechange{For a generic unoriented one-component plane curve \(C\), the
two-state homology}
\begin{equation*}
{\color{revisionpurple}
\mathcal H_{\mathrm{circle}}^{\mathrm{un}}(C)
=\mathcal H_{\mathrm{circle}}(C,o)
\oplus\mathcal H_{\mathrm{circle}}(C,-o)
\tag{11}}
\end{equation*}
\purplechange{is independent of the temporary choice of orientation \(o\) as
a triply graded group and is invariant under direct self-tangency and weak
triple-point perestroikas.  Its canonical orientation-exchange involution
\(\tau\) sends degree \((i,j,\varepsilon)\) to
\((i,-j,1-\varepsilon)\).  Put}
\[
{\color{revisionpurple}
\begin{aligned}
G_o(q)&:=\left.\operatorname{grank}_{t,q,u}
\mathcal H_{\mathrm{circle}}(C,o)\right|_{t=u=-1},\\
G_{\mathrm{un}}(q)&:=\left.\operatorname{grank}_{t,q,u}
\mathcal H_{\mathrm{circle}}^{\mathrm{un}}(C)\right|_{t=u=-1}.
\end{aligned}}
\]
\purplechange{Then its decategorification is the antisymmetrization}
\[
{\color{revisionpurple}
G_{\mathrm{un}}(q)=G_o(q)-G_o(q^{-1}).}
\]
\purplechange{For every \(m\geq0\),}
\begin{equation*}
{\color{revisionpurple}
\left.D_q^mG_{\mathrm{un}}(q)\right|_{q=1}
=\bigl(1-(-1)^m\bigr)
\left.D_q^mG_o(q)\right|_{q=1}.
\tag{19}}
\end{equation*}
\purplechange{Thus the odd moments double and the even moments cancel.  In
particular,}
\begin{equation*}
{\color{revisionpurple}
J^-(C)=1-G_{\mathrm{un}}'(1),
\tag{20}}
\end{equation*}
\purplechange{and}
\begin{equation*}
{\color{revisionpurple}
J^+(C)=1-G_{\mathrm{un}}'(1)+\#V(C).
\tag{21}}
\end{equation*}
\end{proposition}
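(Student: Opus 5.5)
The plan is to treat the two-state homology summand by summand. All structural statements will be reduced to facts already established for a single oriented state, and all numerical statements to the orientation-reversal rule \(I_q(\widetilde C_{-o})=-I_{q^{-1}}(\widetilde C_o)\) recalled in Section~\ref{sec:classical-formulas}.

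\emph{Independence, invariance and the involution.} First, observe that the orientation-preserving smoothing for \(-o\) is the same set of circles as for \(o\) with all orientations reversed. Replacing \(o\) by \(-o\) therefore only swaps the two direct summands in each fixed \((i,j,\varepsilon)\)-component, which gives independence of the temporary choice. For invariance, let \((C,o)\) and \((C',o')\) be related by a direct self-tangency or weak triple-point perestroika with transported orientation. The same move carries \((C,-o)\) to \((C',-o')\), and it is still direct, respectively weak, because being direct and being weak are both unchanged when the orientation of the curve is reversed. Proposition~\ref{prop:component-refinement-invariance} then applies to each orientation state separately and gives trigraded isomorphisms \([K]\mapsto[F_1(K)]\) on both summands; their direct sum is the required isomorphism. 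The involution \(\tau\) comes from the identification \(\mathcal H_{\mathrm{circle}}^{i,j,\varepsilon}(C,o)\cong\mathcal H_{\mathrm{circle}}^{i,-j,1-\varepsilon}(C,-o)\) displayed before the proposition. That identification rests on \(\ind_{-o}=-\ind_o\) and \(\rot_{-o}=-\rot_o\) on corresponding circles, so that \(\varepsilon_{-o}=1-\varepsilon_o\). Taking the identification together with its inverse gives an involution that exchanges the two summands and sends degree \((i,j,\varepsilon)\) to \((i,-j,1-\varepsilon)\).

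\emph{Decategorification.} By \eqref{eq:two-state-circle-sum} and Proposition~\ref{prop:circle-decategorification} applied to both orientations,
\[
G_{\mathrm{un}}(q)=I_q(\widetilde C_o)+I_q(\widetilde C_{-o}).
\]
Substituting \(I_q(\widetilde C_{-o})=-I_{q^{-1}}(\widetilde C_o)\) gives \(G_{\mathrm{un}}(q)=G_o(q)-G_o(q^{-1})\). As a cross-check, the same formula follows directly from the \(t=u=-1\) specialization of \(q^ju^\varepsilon+q^{-j}u^{1-\varepsilon}\), which equals \((-1)^\varepsilon(q^j-q^{-j})\).

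For the moments, I will use \(D_q^m(q^{-j})=(-j)^mq^{-j}\). This gives \(\left.D_q^m\bigl(G_o(q^{-1})\bigr)\right|_{q=1}=(-1)^m\left.D_q^mG_o(q)\right|_{q=1}\), and hence (19).

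\emph{The formulas for \(J^\pm\).} For \(m=1\), (19) gives \(G_{\mathrm{un}}'(1)=2G_o'(1)\). Corollary~\ref{cor:circle-Jminus} then gives (20).

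For (21), I will use Proposition~\ref{prop:full-LP}. It writes \(I_q(C)=G_o(q)-\tfrac12\sum_v q^{j(v)}(q^{1/2}-q^{-1/2})\), and each term \(q^{j(v)}(q^{1/2}-q^{-1/2})\) has derivative \(1\) at \(q=1\). So \(\left.\frac{d}{dq}I_q(C)\right|_{q=1}=G_o'(1)-\tfrac12\#V(C)\). Formula (7') then gives
\[
J^+(C)=1-2G_o'(1)+\#V(C)=1-G_{\mathrm{un}}'(1)+\#V(C).
\]

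\emph{Main obstacle.} The step needing the most care is the orientation-reversal sign bookkeeping. Specifically, I must check that reversing the orientation negates both the averaged index and the rotation sign of each smoothing circle, and not just one of them. I would verify this locally: reversing a circle negates its rotation number, and the winding number of every complementary point is negated, so the average index of the two adjacent regions is negated too. This is the classical input behind \(I_q(-C)=-I_{q^{-1}}(C)\), which I take from \cite{LanzatPolyak2013}.
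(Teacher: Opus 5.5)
Your proposal is correct and follows the paper's proof essentially step by step. Independence comes from exchanging the two summands, the involution from \(\ind_{-o}=-\ind_o\) and \(\rot_{-o}=-\rot_o\), (19) from the antisymmetrization \(G_{\mathrm{un}}(q)=G_o(q)-G_o(q^{-1})\) together with \(D_q^m(q^{-j})=(-j)^mq^{-j}\), and (20) from Corollary~\ref{cor:circle-Jminus} at \(m=1\). The only difference is cosmetic: you reach (21) through (7') of Proposition~\ref{prop:full-LP}, whereas the paper applies \(J^+=J^-+\#V(C)\) directly, but (7') is itself proved from that same relation, so the input is identical.
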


\bluechange{The generatorwise involution, the odd/even Euler-derivative
identity, and equivariance under the allowed perestroikas are proved in
Section~\ref{proof:two-state}.}
\purplechange{Proposition~\ref{prop:two-state} proves
Part~{\rm (\ref{thm:main-unoriented})} of Theorem~\ref{thm:main}.}

\subsection{Infinitely many pairs distinguished by the homological
Seifert-state lifts}
\label{subsec:infinite-pairs}

\begin{proposition}[Infinitely many pairs with identical polynomial data]
\label{prop:infinite-pairs}
There exists an infinite family
\[
\bigl\{((C_n,o_n),(C'_n,o'_n))\bigr\}_{n\ge1}
\]
satisfying Part~{\rm (\ref{thm:main-infinite-pairs})} of
Theorem~\ref{thm:main}.
\end{proposition}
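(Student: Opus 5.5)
The plan is to give an explicit construction, working backwards from the Seifert state (the oriented smoothing configuration). The invariants in Part~(\ref{thm:main-infinite-pairs}) depend only on the nested circle configuration \(\widetilde C_o\), its Alexander indices and rotation signs. So I would first write down two circle configurations with the required data. Then I would show that each is the orientation-preserving smoothing of a generic one-component curve with the same number of double points.

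\emph{Step 1: the configurations.} The target coefficients of \(q^1\) say what to build. For \(C_n\), the index-\(1\) level should consist of \(n+1\) regions, each a disk with two holes. For \(C'_n\), it should consist of one disk with \(2n+2\) holes and \(n\) disks. For \(\widetilde{(C_n)}_{o_n}\) I take:
\begin{itemize}
\item \(n+1\) disjoint counterclockwise circles \(A_0,\dots,A_n\) in the unbounded region;
\item inside each \(A_k\), two disjoint counterclockwise circles \(B_k,B'_k\);
\item one extra clockwise circle \(U\) in the unbounded region.
\end{itemize}
For \(\widetilde{(C'_n)}_{o'_n}\) I take:
\begin{itemize}
\item one counterclockwise circle \(A\) containing \(2n+2\) disjoint counterclockwise circles;
\item \(n\) further empty counterclockwise circles in the unbounded region;
\item the same clockwise circle \(U\).
\end{itemize}
The circle \(U\) is needed because two side-by-side counterclockwise circles are locally antiparallel where they face each other, so they cannot be merged by an orientation-preserving smoothing. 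A clockwise neighbour fixes this, as in the figure-eight.

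The indices are then as follows. The unbounded region has index \(0\) and has \(n+2\) holes in both configurations. The interior of \(U\) has index \(-1\). The index-\(1\) regions are as prescribed. There are \(2n+2\) index-\(2\) disks in both configurations.

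The circles contribute as follows in both cases:
\begin{itemize}
\item \(n+1\) circles with \((\ind,\rot)=(1/2,+1)\);
\item \(2n+2\) circles with \((3/2,+1)\);
\item one circle with \((-1/2,-1)\).
\end{itemize}
Thus both have \(3n+4\) circles, \(3n+3\) of them positively oriented, and their smoothing terms \(I_q\) are equal. Proposition~\ref{prop:comparison} then gives \(P_{C_n}=P_{C'_n}\). At each level the total \(H_0\) and \(H_1\) ranks agree, so the bigraded region homologies are isomorphic by Proposition~\ref{prop:region-decategorification}. The trigraded circle homologies are isomorphic by Proposition~\ref{prop:circle-decategorification}.

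\emph{Step 2: realizability.} Start from the configuration and insert one crossing for each edge of a spanning tree of the "adjacency" graph of the circles. A crossing can join two circles facing each other across a common region whenever their orientations are locally parallel, as an orientation-preserving smoothing requires.
\begin{itemize}
\item Nested, equally oriented circles (\(B_k\) in \(A_k\), and the inner circles in \(A\)) are parallel on their facing sides, so they can be joined.
\item \(U\) faces each top-level counterclockwise circle antiparallel-compatibly, so it can be joined to each of them through the unbounded region.
\end{itemize}
Each inserted crossing merges two components. A tree with \(3n+4\) vertices therefore gives a connected curve with \(3n+3\) double points in both cases. Smoothing these crossings recovers the chosen configuration, so all the data above are data of these curves.

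\emph{Step 3: distinguishing the refinements.} This is a direct count.
\begin{itemize}
\item The index-\(1\) part of \(\mathcal P_{\mathrm{region}}\) is \((n+1)(1+2t)v^2\) for \(C_n\). For \(C'_n\) it is \((1+(2n+2)t)v^{2n+2}+n\). These differ because \(n\ge1\).
\item The trigraded region groups have different ranks in degree \((1,1,2)\), so they are not isomorphic.
\item The branches of the dual tree (rooted at \(v_\infty\), as in Part~(\ref{thm:main-components})) are:
  \begin{itemize}
  \item for \(C_n\): \(\{U\}\) and \(n+1\) branches, each of size \(3\);
  \item for \(C'_n\): \(\{U\}\), one branch of size \(2n+3\), and \(n\) branches of size \(1\).
  \end{itemize}
  The multisets of \(w\)-exponents therefore differ, so \(\mathcal P_{\mathrm{branch}}\) differs and no permutation of branches identifies the families.
\end{itemize}

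I expect the main obstacle to be Step~2. One has to verify carefully the orientation-compatibility rule for where a crossing may be inserted. One must also check that the clockwise circle \(U\) can be joined to every top-level counterclockwise circle without creating extra crossings. If the local rule turns out to be different, I would adjust the top level using alternating clockwise connectors. The same connectors would be added to both members of each pair, so that all the equalities in Step~1 survive.
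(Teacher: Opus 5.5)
Your proof is correct, but it uses a different family from the paper's and a different realizability argument.

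\textbf{The paper's family.} The paper reads its pairs off Figure~\ref{fig:infinite-pairs}. Each index-one region of $C_n$ contains one counterclockwise hole of index $\tfrac32$ and one clockwise hole of index $\tfrac12$, the latter enclosing an index-zero disk. The top level carries $n$ clockwise circles. Each smoothing therefore has $4n+3$ circles, of which $2n+2$ are positive. Realizability is left entirely to the drawing.

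\textbf{Your family.} You put two counterclockwise holes in each index-one region and use a single clockwise circle $U$. This gives $3n+4$ circles, $3n+3$ of them positive, and $3n+3$ double points. You prove realizability in the text: insert one crossing along pairwise disjoint arcs, one for each edge of a spanning tree of the joinability graph.

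\textbf{The joinability rule.} Your rule is the right one, but it is clearest stated through indices rather than via ``antiparallel-compatibly''. Two circles bounding a common region of index $j$ can be merged by a crossing whose oriented smoothing returns them exactly when their other sides have indices $j+1$ and $j-1$. This covers the nested case ($A_k$ with $B_k$ and $B'_k$, and $A$ with its inner circles). It also covers the side-by-side case ($U$ with each top-level counterclockwise circle). The required disjoint arcs exist: a star from $U$ in the unbounded region, and short arcs inside each index-one region. So the hedge at the end of your Step~2 is unnecessary.

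\textbf{Distinguishing step.} The separating mechanism is the same as the paper's. At index one, $C_n$ has $n+1$ disks with two holes, while $C'_n$ has one disk with $2n+2$ holes plus $n$ disks. The branch sizes are $n+1$ branches of size $3$ plus one of size $1$, against one branch of size $2n+3$ plus $n+1$ of size $1$. Your coefficients of $q^1$ in $\mathcal P_{\mathrm{region}}$ coincide exactly with those stated in the theorem. Isomorphism of the ordinary groups follows from equal ranks, since all groups involved are free abelian.

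\textbf{What each route buys.} Yours gives a self-contained, figure-independent existence proof with an explicit double-point count. It also gives a general recipe: any circle configuration whose joinability graph is connected is realized. This is essentially the broader mechanism alluded to in the paper's closing remark. The paper's route instead gives concrete pictures of the curves themselves.
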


\begin{figure}[ht]
\centering
\includegraphics[width=0.98\textwidth]{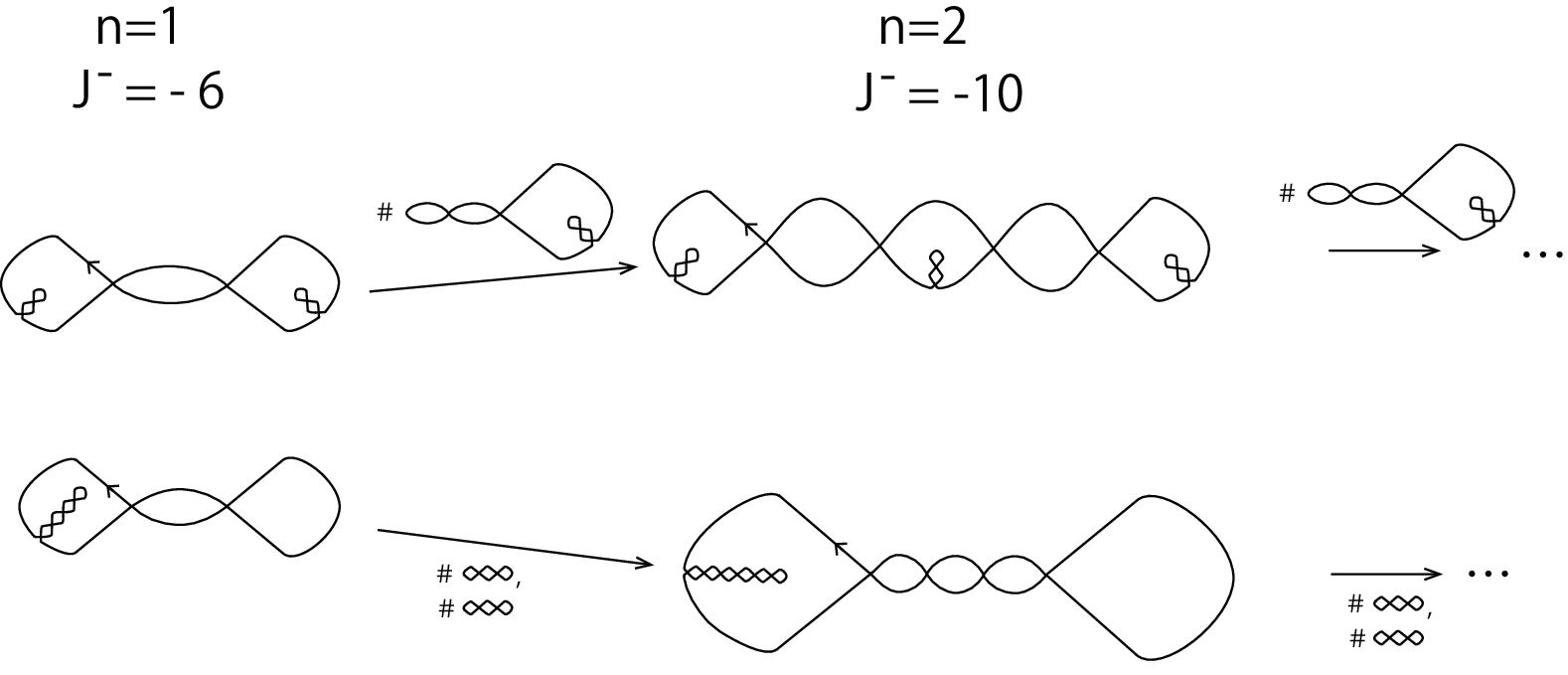}
\caption{\newchange{The diagram of the infinite family.  For each
\(n\ge1\), the two one-component curves have the same number of double
points.}}
\label{fig:infinite-pairs}
\end{figure}

\bluechange{The complete circle table, component-homology table, branch
calculation, and Poincar\'e-polynomial calculation are given in
Section~\ref{proof:strictness}.}

\section{Componentwise refinements and Poincar\'e evaluations}
\label{sec:component-refinements}

The actual region summands and the branches of the rooted dual
tree are the primary data.  This section records two numerical evaluations
without replacing those decompositions by the resulting integers.

\subsection{The component grading of the region homology}

For an actual region \(R\), set
\[
\latestchange{k(R)}:=\#\pi_0(\partial R)-1\in\mathbb Z_{\geq0}.
\]
For a bounded planar region this integer equals
\(\operatorname{rank}H_1(R;\mathbb Z)\).  Grouping the actual summands by
this integer gives
\[
\mathcal H_{\mathrm{region}}^{i,j,k}(C,o)
:=
\bigoplus_{\substack{R\in\pi_0(R_j(C,o))\\ \latestchange{k(R)}=k}}
H_i(R;\mathbb Z).
\]
Its Poincar\'e polynomial is
\begin{equation}
\label{eq:component-Poincare}
\mathcal P_{\mathrm{region}}(C,o;t,q,v)
:=
\sum_j\sum_{R\in\pi_0(R_j(C,o))}\sum_i
\operatorname{rank}H_i(R;\mathbb Z)t^iq^jv^{\latestchange{k(R)}}.
\end{equation}
It satisfies
\[
\mathcal P_{\mathrm{region}}(C,o;t,q,1)
=\operatorname{grank}_{t,q}\mathcal H_{\mathrm{region}}(C,o),
\qquad
\mathcal P_{\mathrm{region}}(C,o;-1,q,1)=P_C(q).
\]
This numerical grading can forget the actual face labels; it is an
evaluation of the componentwise direct-sum decomposition, not its
replacement.

\begin{proposition}[Component Poincar\'e evaluation]
\label{prop:component-Poincare}
\purplechange{The polynomial}
\begin{equation*}
{\color{revisionpurple}\adjustbox{max width=0.90\linewidth}{$\displaystyle
\mathcal P_{\mathrm{region}}(C,o;t,q,v)
:=\sum_j\sum_{R\in\pi_0(R_j(C,o))}\sum_i
\operatorname{rank}H_i(R;\mathbb Z)t^iq^jv^{k(R)}.$}}
\tag{12}
\end{equation*}
\purplechange{is preserved by direct self-tangency and weak
triple-point perestroikas.  For every \(n\geq1\), the two curves
\((C_n,o_n)\) and \((C'_n,o'_n)\) in
Figure~\ref{fig:infinite-pairs} have the following coefficients of
\(q^1\), respectively:}
{\color{latestbrown}
\[
(n+1)(1+2t)v^2
\qquad\text{and}\qquad
\bigl(1+(2n+2)t\bigr)v^{2n+2}+n.
\]
Since these coefficients are unequal,
\[
\mathcal P_{\mathrm{region}}(C_n,o_n;t,q,v)
\ne
\mathcal P_{\mathrm{region}}(C'_n,o'_n;t,q,v).
\]
}
Hence this evaluation distinguishes every displayed pair,
although the two coefficients become equal after \(v=1\).
\end{proposition}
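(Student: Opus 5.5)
The proof splits into two independent parts: invariance of \(\mathcal P_{\mathrm{region}}\), and an explicit computation of its \(q^1\)-coefficient for the pairs in Figure~\ref{fig:infinite-pairs}.

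\emph{Invariance.} Let \((C,o)\) and \((C',o')\) be related by a direct self-tangency or weak triple-point perestroika. Take the ambient isotopy \(F\) of Lemma~\ref{lem:smoothed-configuration-invariance}. Its final map \(F_1\) is a homeomorphism of \(\mathbb R^2\) carrying \(\widetilde C_o\) onto \(\widetilde C'_{o'}\). Hence it induces a bijection \(R\mapsto F_1(R)\) between regions that preserves Alexander indices. By Proposition~\ref{prop:component-refinement-invariance}, \((F_1|_R)_*\) identifies \(H_i(R;\mathbb Z)\) with \(H_i(F_1(R);\mathbb Z)\) for every \(i\). Since \(F_1\) also restricts to a homeomorphism \(\partial R\to\partial F_1(R)\), we get \(\#\pi_0(\partial R)=\#\pi_0(\partial F_1(R))\), so \(k(R)=k(F_1(R))\). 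Every monomial \(\operatorname{rank}H_i(R)t^iq^jv^{k(R)}\) in (12) is therefore matched term by term, and \(\mathcal P_{\mathrm{region}}\) is unchanged.

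\emph{Computation on the family.} First I would read off the Seifert state \(\widetilde{(C_n)}_{o_n}\) and \(\widetilde{(C'_n)}_{o'_n}\) from the figure. This means listing the smoothing circles, their nesting (the rooted dual tree), and the Alexander index of each region. For the index-\(1\) level I would then tabulate every region \(R\), recording \(\operatorname{rank}H_0(R)=1\), \(\operatorname{rank}H_1(R)=k(R)\) (the bounded case, from the remark after the definition of \(k\)), and \(k(R)\). The index-\(1\) regions are bounded, since the unbounded region has index \(0\). Each region of index \(1\) contributes \((1+k(R)t)v^{k(R)}\).

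The target formulas indicate the configurations:
\begin{itemize}
\item For \(C_n\) there should be \(n+1\) index-\(1\) regions, each an annulus-with-two-holes (\(k=2\)). Together they give \((n+1)(1+2t)v^2\).
\item For \(C'_n\) there should be one region with \(k=2n+2\) and \(n\) disks (\(k=0\)). Together they give \((1+(2n+2)t)v^{2n+2}+n\).
\end{itemize}
The main work is confirming these region counts and boundary counts from the pictured circle configurations for general \(n\). I would first verify \(n=1\) directly from the figure. Then I would argue by the evident periodic structure: each increment of \(n\) adds one copy of the repeating block. In \(C_n\) this adds one more index-\(1\) region with three boundary circles. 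In \(C'_n\) it adds two holes to the big region and one new index-\(1\) disk.

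\emph{Inequality.} Finally I would compare the two coefficients. The first has \(v\)-degree exactly \(2\), since \((n+1)(1+2t)\neq0\). The second contains \(v^{2n+2}\) with \(2n+2\ge4\). So the coefficients differ as polynomials in \(t,v\), and hence \(\mathcal P_{\mathrm{region}}(C_n,o_n)\ne\mathcal P_{\mathrm{region}}(C'_n,o'_n)\). As a consistency check, at \(v=1\) both coefficients become \((n+1)+(2n+2)t\), which matches the equality of the ordinary region homologies claimed in Theorem~\ref{thm:main}(H).
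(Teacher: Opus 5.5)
Your proposal is correct and follows essentially the paper's route. Invariance comes from the ambient isotopy of Lemma~\ref{lem:smoothed-configuration-invariance} matching regions, their homology and their boundary-component counts one by one; the paper phrases this through the face-state sum (15) specialized by $[R]\mapsto v^{k(R)}$, but the argument is the same. The $q^1$-coefficients then come from the index-one region table read off the smoothing configurations, exactly as in Section~\ref{proof:strictness}.

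One small slip: the $n+1$ index-one regions of $C_n$ are disks with two holes (three boundary circles, $k=2$), as your later sentence says. An \emph{annulus} with two holes would instead have $k=3$.
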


\bluechange{The complete component calculation is given in
Section~\ref{proof:strictness}.}

\subsection{The branch decomposition of the smoothing-circle homology}

\rthreechange{For each smoothing circle \(K\in\pi_0(\widetilde C_o)\), retain the unordered pair of actual complementary regions \(R\in\pi_0(\mathbb R^2\setminus\widetilde C_o)\) satisfying \(K\subset\partial R\).}

{\color{branchblue}
Let \(T(\widetilde C_o)\) be the dual tree: its vertices are the actual
components of \(\mathbb R^2\setminus\widetilde C_o\), and every smoothing
circle gives the edge joining its two boundary regions.  Root the tree at
the vertex \(v_\infty\) of the unbounded region.  Its branches are
\[
B\in\pi_0\!\left(T(\widetilde C_o)\setminus\{v_\infty\}\right).
\]
Assign a smoothing circle to \(B\) when the non-root endpoint of its dual
edge lies in \(B\); this includes the edge joining \(B\) to the root.
This gives a partition of all smoothing circles.  \finalchange{Denote the
resulting branch-assignment map by
\[
\beta_o\colon\pi_0(\widetilde C_o)\longrightarrow
\pi_0\!\left(T(\widetilde C_o)\setminus\{v_\infty\}\right),
\qquad K\longmapsto\beta_o(K).
\]}
Define
\[
\mathcal H_{\mathrm{circle}}^{i,j,\varepsilon}(C,o;B)
=
\begin{cases}
\displaystyle
\bigoplus_{\substack{K\in\pi_0(\widetilde C_o),\ \finalchange{\beta_o(K)=B}\\
\ind_o(K)=j,\ \varepsilon(K)=\varepsilon}}
\mathbb Z\langle[K]\rangle,&i=1,\\[3mm]
0,&i\ne1.
\end{cases}
\]
The invariant is the branch-indexed family
\begin{equation}
\label{eq:branch-circle-decomposition}
\left(\mathcal H_{\mathrm{circle}}^{i,j,\varepsilon}(C,o;B)\right)_
{B\in\pi_0(T(\widetilde C_o)\setminus\{v_\infty\})}.
\end{equation}
It is compared up to permutation of the actual branches.  Forgetting the
branch index and taking the direct sum recovers the ordinary
smoothing-circle homology.  Counting the circles assigned to each branch
gives the numerical evaluation
\begin{equation}
\label{eq:branch-circle-Poincare}
{\color{thisbrown}\resizebox{0.98\textwidth}{!}{$\displaystyle
\mathcal P_{\mathrm{branch}}(C,o;t,q,u,w):=
\sum_{B\in\pi_0(T(\widetilde C_o)\setminus\{v_\infty\})}
\left(\sum_{\substack{K\in\pi_0(\widetilde C_o)\\\beta_o(K)=B}}
tq^{\ind_o(K)}u^{\varepsilon(K)}\right)
w^{\#\beta_o^{-1}(B)}.$}}
\end{equation}
At \(w=1\), this is
\(\operatorname{grank}_{t,q,u}\mathcal H_{\mathrm{circle}}(C,o)\).
The full branch decomposition \eqref{eq:branch-circle-decomposition} is the
primary structure; \eqref{eq:branch-circle-Poincare} is one numerical
evaluation of it.

}

\begin{proposition}[Invariance of the branch decomposition]
\label{prop:branch-invariance}
\branchtext{The ambient isotopy of
Lemma~\ref{lem:smoothed-configuration-invariance} fixes the unbounded
region and induces an isomorphism of rooted dual trees.  It preserves the
branch-indexed family}
\begin{equation*}
{\color{revisionpurple}
\left(\mathcal H_{\mathrm{circle}}^{i,j,\varepsilon}(C,o;B)\right)_
{B\in\pi_0(T(\widetilde C_o)\setminus\{v_\infty\})}}
\tag{13}
\end{equation*}
\purplechange{and the branch Poincar\'e polynomial}
\begin{equation*}
{\color{revisionpurple}\adjustbox{max width=0.90\linewidth}{$\displaystyle
\mathcal P_{\mathrm{branch}}(C,o;t,q,u,w):=
\sum_{B\in\pi_0(T(\widetilde C_o)\setminus\{v_\infty\})}
\left(\sum_{\substack{K\in\pi_0(\widetilde C_o)\\\beta_o(K)=B}}
tq^{\ind_o(K)}u^{\varepsilon(K)}\right)
w^{\#\beta_o^{-1}(B)}.$}}
\tag{14}
\end{equation*}
\end{proposition}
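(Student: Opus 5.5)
The plan is to use Lemma~\ref{lem:smoothed-configuration-invariance} together with Proposition~\ref{prop:component-refinement-invariance}, and to show that everything entering the definition of the branch-indexed family and of \(\mathcal P_{\mathrm{branch}}\) is determined by the combinatorial data those results already preserve.

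\textbf{Step 1: \(F_1\) fixes the unbounded region.} Let \(F\) be the ambient isotopy of Lemma~\ref{lem:smoothed-configuration-invariance}, with final map \(F_1\). Since \(F_1\) is a homeomorphism of \(\mathbb R^2\), it induces a bijection \(R\mapsto F_1(R)\) between the complementary components of \(\widetilde C_o\) and those of \(\widetilde C'_{o'}\). It also gives a bijection \(K\mapsto F_1(K)\) on smoothing circles. Because \(F_1\) is proper and maps the complement of a large disk containing \(\widetilde C_o\) onto an unbounded set, the unique unbounded component goes to the unique unbounded component. In practice the perestroika and the resulting isotopy are supported in a small disk, so \(F_1\) is the identity near infinity.

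\textbf{Step 2: the rooted dual trees are isomorphic.} Proposition~\ref{prop:component-refinement-invariance} gives the equivalence \(K\subset\partial R\Leftrightarrow F_1(K)\subset\partial F_1(R)\). Hence the pair of boundary regions of \(K\) is carried to the pair of boundary regions of \(F_1(K)\). Therefore \((R\mapsto F_1(R),\,K\mapsto F_1(K))\) is a graph isomorphism \(T(\widetilde C_o)\to T(\widetilde C'_{o'})\), and by Step~1 it sends \(v_\infty\) to \(v'_\infty\).

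\textbf{Step 3: branches and the assignment map are preserved.} A graph isomorphism preserving the root restricts to an isomorphism \(T\setminus\{v_\infty\}\to T'\setminus\{v'_\infty\}\), so it induces a bijection \(\phi\) on branches. For each edge, the non-root endpoint is sent to the non-root endpoint, since the root is preserved. It follows that \(\beta_{o'}(F_1(K))=\phi(\beta_o(K))\) for every circle \(K\).

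\textbf{Step 4: gradings and the isomorphism of families.} Lemma~\ref{lem:smoothed-configuration-invariance} states that \(\ind\) and \(\rot\) are preserved, hence so is \(\varepsilon\). So \([K]\mapsto[F_1(K)]\) restricts, for each branch \(B\), to a trigraded isomorphism
\[
\mathcal H_{\mathrm{circle}}(C,o;B)\cong\mathcal H_{\mathrm{circle}}(C',o';\phi(B)).
\]
This is an isomorphism of the families in (13) up to the permutation \(\phi\). The sizes \(\#\beta_o^{-1}(B)=\#\beta_{o'}^{-1}(\phi(B))\) agree, and each summand of (14) depends only on the graded circle data of a branch. Reindexing the sum over \(B\) by \(\phi\) therefore shows that \(\mathcal P_{\mathrm{branch}}\) is unchanged.

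\textbf{Main obstacle.} The only delicate point is Step~1: justifying that the isotopy respects the root. The cleanest route is to observe that the lemma's isotopy can be chosen compactly supported, being built from the local modification in a disk. Failing that, one uses the topological fact that a homeomorphism of \(\mathbb R^2\) maps the unique component with noncompact closure to the unique such component.
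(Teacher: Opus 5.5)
Your proposal is correct and follows essentially the same route as the paper. The isotopy is the identity outside a compact disk, so the unbounded region goes to the unbounded region; the preserved boundary relation \(K\subset\partial R\) then gives a root-preserving isomorphism of dual trees, which bijects branches and their assigned circles while preserving all three gradings and branch sizes, and this yields invariance of (13) and (14). Your fallback in Step~1, that a homeomorphism of \(\mathbb R^2\) carries the unique unbounded complementary component to the unique unbounded one, is a harmless extra the paper does not need.
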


\bluechange{The proof is given in Section~\ref{proof:branch-invariance}.}
\purplechange{Propositions~\ref{prop:infinite-pairs},
\ref{prop:component-Poincare}, and \ref{prop:branch-invariance},
together with the component and branch Poincar\'e calculations verified in
Section~\ref{proof:strictness}, prove
Part~{\rm (\ref{thm:main-infinite-pairs})} of
Theorem~\ref{thm:main}.}

\section{The homological face-state-sum model and averaging}
\label{sec:discussion}

{\color{faceorange}
Viro placed his Euler-integral polynomial in the quantum-invariant context:
after the substitution \(q=e^h\), its Taylor coefficients are finite-degree
invariants, and its changes under perestroikas resemble skein relations.
He described the Euler-integral formula as an analogue of face state-sum
formulas for quantum knot polynomials
\cite[Section~5.3]{Viro1996}.

Here this analogy is realized by an explicit sum of products of local face
weights.  Fix an orientation state \(s\in\{o,-o\}\).  On the dual tree
\(T(\widetilde C_s)\), use \(\ind_s(R)\) as the Alexander face coloring,
normalized by \(\ind_s(R_\infty)=0\).
Across an edge, the two colors differ by one with the sign prescribed by
the oriented smoothing circle.  Since the dual graph is a tree, this
normalization determines the coloring.

A state is the colored dual tree together with one marked actual face
\(R_*\in\pi_0(\mathbb R^2\setminus\widetilde C_s)\).  For this state,
assign to every face \(R\) the local weight
\[
W_{R_*}(R)
=
\begin{cases}
\displaystyle
\left(\sum_i\operatorname{rank}H_i(R;\mathbb Z)t^i\right)
q^{\ind_s(R)}[R],&R=R_*,\\[3mm]
1,&R\ne R_*.
\end{cases}
\]
The resulting partition function has the traditional state-sum form
\emph{sum over states of products of local weights} and is
\begin{align}
\mathcal Z_{\mathrm{face}}(C,s;t,q)
&:=
\sum_{R_*\in\pi_0(\mathbb R^2\setminus\widetilde C_s)}
\prod_R W_{R_*}(R)\notag\\
&=
\sum_j\sum_{R\in\pi_0(R_j(C,s))}\sum_i
\operatorname{rank}H_i(R;\mathbb Z)t^iq^j[R].
\label{eq:face-state-sum}
\end{align}
Thus the face variable is an actual region, its coloring is the Alexander
index, and its Euler-characteristic weight is lifted to its singular
homology.  The natural homomorphism \([R]\mapsto1\) sends
\eqref{eq:face-state-sum} to
\(\operatorname{grank}_{t,q}\mathcal H_{\mathrm{region}}(C,s)\);
\thischange{the further specialization \(t=-1\) gives the corresponding
orientation-state specialization stated in Theorem~\ref{thm:main}.}

For an unoriented curve, the two orientation states give
\begin{equation}
\label{eq:two-state-face-sum}
\mathcal Z_{\mathrm{face}}^{\mathrm{un}}(C;t,q)
:=
\sum_{s\in\{o,-o\}}
\sum_j\sum_{R\in\pi_0(R_j(C,s))}\sum_i
\operatorname{rank}H_i(R;\mathbb Z)t^iq^j[s,R].
\end{equation}
{\color{currentbrown}
For every \(i,j\), define the unoriented region homology by
\begin{align}
\mathcal H_{\mathrm{region}}^{\mathrm{un},i,j}(C)
&:=\bigoplus_{s\in\{o,-o\}}
\mathcal H_{\mathrm{region}}^{i,j}(C,s)\notag\\
&=\mathcal H_{\mathrm{region}}^{i,j}(C,o)
\oplus\mathcal H_{\mathrm{region}}^{i,j}(C,-o).
\label{eq:unoriented-region-homology}
\end{align}
The underlying actual regions are independent of their orientation state,
whereas
\[
\ind_{-o}(R)=-\ind_o(R).
\]
Consequently,
\[
R_j(C,-o)=R_{-j}(C,o),
\qquad
\mathcal H_{\mathrm{region}}^{i,j}(C,-o)
\cong
\mathcal H_{\mathrm{region}}^{i,-j}(C,o)
\]
component by component.  Changing the temporary orientation exchanges the
two direct-sum factors in \eqref{eq:unoriented-region-homology}; hence this
two-state homology is independent of that choice.

\begin{propositionstar}[Orientation independence of the two-state region homology]
\rthreechange{The group \(\mathcal H_{\mathrm{region}}^{\mathrm{un},i,j}(C)\) defined in \eqref{eq:unoriented-region-homology} is independent of the temporary orientation of the unoriented curve \(C\); reversing that orientation exchanges its two direct-sum factors and sends Alexander degree \(j\) to \(-j\).}
\end{propositionstar}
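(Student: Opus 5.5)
The argument rests on two observations. First, the smoothing $\widetilde C_{-o}$ is the same subset of the plane as $\widetilde C_o$, with every circle's orientation reversed. Second, the Alexander index changes sign under this reversal.

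\emph{Step 1: the regions do not depend on the orientation.} At each double point, the orientation-preserving smoothing for $-o$ joins the same pairs of half-edges as the smoothing for $o$. Reversing both strands at a crossing does not change which incoming half-edge is reconnected to which outgoing half-edge; it only swaps the roles of incoming and outgoing. Hence $\widetilde C_{-o}=\widetilde C_o$ as subsets of $\mathbb R^2$, carrying opposite orientations. In particular
\[
\pi_0(\mathbb R^2\setminus\widetilde C_{-o})=\pi_0(\mathbb R^2\setminus\widetilde C_o),
\]
and the actual regions are literally the same sets.

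\emph{Step 2: the index changes sign.} The winding number of a reversed cycle is the negative of the original, so $\ind_{-o}(R)=-\ind_o(R)$ for every region $R$. This gives the equality of sets $R_j(C,-o)=R_{-j}(C,o)$. The identity map on each region $R$ then induces the identity on $H_i(R;\mathbb Z)$. Using the canonical decomposition of Proposition~\ref{prop:region-decategorification}, this yields a summand-by-summand identification
\[
\mathcal H^{i,j}_{\mathrm{region}}(C,-o)=\mathcal H^{i,-j}_{\mathrm{region}}(C,o),
\]
which sends the summand of $R$ to the summand of $R$ and sends Alexander degree $j$ to $-j$.

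\emph{Step 3: independence of the temporary orientation.} If we choose $-o$ as the temporary orientation instead of $o$, the definition in \eqref{eq:unoriented-region-homology} becomes
\[
\mathcal H^{i,j}_{\mathrm{region}}(C,-o)\oplus\mathcal H^{i,j}_{\mathrm{region}}(C,-(-o)).
\]
This has the same two summands as before, listed in the opposite order. The swap of factors is therefore a canonical isomorphism. Each factor keeps its decomposition by actual regions, and the degree-$j$ components are matched to each other. Combining this with Step~2, the orientation-exchange map on the two-state group sends Alexander degree $j$ to $-j$, as the statement asserts.

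The only point needing care is Step~1, namely that the oriented smoothing is insensitive to reversing the orientation. I would verify it in the local model of Figure~\ref{fig:incoming-edges}. Reversal turns the incoming edges $e^v_l,e^v_r$ into outgoing edges, and the pairing of half-edges chosen by the smoothing is the same, because it is the unique pairing compatible with both orientations. Everything after this is bookkeeping of direct summands.
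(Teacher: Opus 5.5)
Your proposal is correct and follows essentially the same route as the paper: the actual regions of $\widetilde C_{-o}=\widetilde C_o$ are unchanged, $\ind_{-o}(R)=-\ind_o(R)$ gives $R_j(C,-o)=R_{-j}(C,o)$ and hence the summand-by-summand identification $\mathcal H^{i,j}_{\mathrm{region}}(C,-o)\cong\mathcal H^{i,-j}_{\mathrm{region}}(C,o)$, and changing the temporary orientation merely swaps the two factors. Your local check that the orientation-preserving smoothing pairs the same half-edges for $o$ and $-o$ supplies a detail that the paper only asserts.
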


The homomorphism
\([s,R]\mapsto1\) sends \eqref{eq:two-state-face-sum} to
\(\operatorname{grank}_{t,q}
\mathcal H_{\mathrm{region}}^{\mathrm{un}}(C)\).

{\color{lastbrown}
The corresponding component Poincar\'e evaluation is
\begin{equation}
\mathcal P_{\mathrm{region}}^{\mathrm{un}}(C;t,q,v)
:=\sum_{s\in\{o,-o\}}
\mathcal P_{\mathrm{region}}(C,s;t,q,v).
\label{eq:unoriented-component-Poincare}
\end{equation}
It is likewise independent of the temporary orientation.
}

{\color{lastbrown}
The relation with the smoothing circles is recorded one actual connected
region at a time, without choosing a cellular model.  Every smoothing
circle \(K\subset\partial R\) has a collar push-off
\(\gamma_K\subset R\), oriented as a boundary cycle.  If \(R\) is bounded
and has boundary circles \(K_0,\ldots,K_k\), these cycles give the standard
presentation
\[
H_1(R;\mathbb Z)\cong
\frac{\bigoplus_{\nu=0}^{k}\mathbb Z\langle[\gamma_\nu]\rangle}
{\mathbb Z\langle[\gamma_0]+\cdots+[\gamma_k]\rangle}
\cong\mathbb Z^k.
\]
A standard deformation retraction onto a bouquet of the \(k\) inner loops
shows that the outer loop is homotopically dependent; its homology class is
eliminated by the displayed boundary relation.
Moreover,
\[
H_0(R;\mathbb Z)\cong\mathbb Z\langle[p_R]\rangle,
\qquad H_i(R;\mathbb Z)=0\quad(i\geq2).
\]
Thus, after summing over \(R\in\pi_0(R_j(C,o))\), the zero-dimensional
homology has one generator \([p_R]\) for each connected region.  If \(R\)
is unbounded with \(b\) actual boundary circles, their collar push-offs
form a basis of \(H_1(R;\mathbb Z)\cong\mathbb Z^b\), with the same
descriptions of \(H_0\) and the higher homology.

Orientation reversal leaves every actual region, point cycle \([p_R]\),
and collar cycle \([\gamma_K]\) unchanged, while it sends its Alexander
degree from \(j\) to \(-j\).  Hence both the zero-cycles indexed by
connected regions and every boundary-cycle generator are matched
component by component between the two summands of
\(\mathcal H_{\mathrm{region}}^{\mathrm{un}}(C)\).  Together with this
cycle correspondence, Equations~\eqref{eq:face-state-sum} and
\eqref{eq:two-state-face-sum} give the promised concrete homological
face-state-sum model rather than only an equality of its final
polynomials.
}
}

Averaging connects this face model with the curvature model.  The Alexander
index is constant on every complementary face.  Its average across a
smoothing circle gives the half-integral degree of the circle generator,
and its average at a double point gives the degrees of the vertex and edge
terms.  Hopf's Umlaufsatz turns the curvature integral of each smoothed
circle into its rotation sign.  Locally, the edge class
\([e_l^v-e_r^v]\) carries the two averaged degrees with opposite bending
signs, while the vertex complex places the same degrees in opposite
homological parities.

Viro's use of Rokhlin's complex-orientation formula gives a separate
geometric parallel.  For a real algebraic curve of Klein type~I, complex
conjugation exchanges the two complex orientations.  The involution on the
two-state homology likewise exchanges the two orientations and sends
\((j,\varepsilon)\) to \((-j,1-\varepsilon)\).
}

\begin{proposition}[Homological face-state-sum specialization]
\label{prop:face-state-sum}
\purplechange{For \(s\in\{o,-o\}\), the marked-face state sum is}
\begin{equation*}
\begin{aligned}
\mathcal Z_{\mathrm{face}}(C,s;t,q)
&=\sum_{R_*\in\pi_0(\mathbb R^2\setminus\widetilde C_s)}
\prod_R W_{R_*}(R)\\
&=\sum_j\sum_{R\in\pi_0(R_j(C,s))}\sum_i
\operatorname{rank}H_i(R;\mathbb Z)t^iq^j[R].
\end{aligned}
\tag{15}
\end{equation*}
\purplechange{For an unoriented curve, its two-state version is}
\begin{equation*}
{\color{revisionpurple}
\mathcal Z_{\mathrm{face}}^{\mathrm{un}}(C;t,q)
=\sum_{s\in\{o,-o\}}\sum_j
\sum_{R\in\pi_0(R_j(C,s))}\sum_i
\operatorname{rank}H_i(R;\mathbb Z)t^iq^j[s,R].}
\tag{16}
\end{equation*}
\purplechange{Equations (15) and (16) are invariant under direct
self-tangency and weak triple-point perestroikas.  The homomorphism
\([R]\mapsto1\) sends (15) to
\(\operatorname{grank}_{t,q}\mathcal H_{\mathrm{region}}(C,s)\), and
the further specialization \(t=-1\) gives \(P_C(q)\) for \(s=o\) and
\(P_C(q^{-1})\) for \(s=-o\).  Equation (16) is independent of the
temporary orientation.}
\end{proposition}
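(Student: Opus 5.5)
The proof has three parts: evaluate the product in (15), specialize it, and then deduce invariance and orientation independence from Lemma~\ref{lem:smoothed-configuration-invariance} and Proposition~\ref{prop:component-refinement-invariance}.

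\textbf{Evaluating the product.} Fix a state \(R_*\). Every factor \(W_{R_*}(R)\) with \(R\ne R_*\) equals \(1\), so
\[
\prod_R W_{R_*}(R)=\Bigl(\sum_i\operatorname{rank}H_i(R_*;\mathbb Z)t^i\Bigr)q^{\ind_s(R_*)}[R_*].
\]
Summing over \(R_*\in\pi_0(\mathbb R^2\setminus\widetilde C_s)\) gives a sum over all regions. Regrouping that sum by \(j=\ind_s(R)\) gives the second line of (15), since the regions of index \(j\) are exactly \(\pi_0(R_j(C,s))\). Finiteness holds because there are finitely many regions and each has homology only in degrees \(0\) and \(1\), by the explicit presentation recalled just before the statement.

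\textbf{Specialization.} Apply \([R]\mapsto1\). By the decomposition (1) of Proposition~\ref{prop:region-decategorification}, the coefficient of \(t^iq^j\) becomes \(\operatorname{rank}\mathcal H^{i,j}_{\mathrm{region}}(C,s)\), so (15) maps to \(\operatorname{grank}_{t,q}\mathcal H_{\mathrm{region}}(C,s)\). Setting \(t=-1\) and using (2) gives \(P_C(q)\) when \(s=o\). For \(s=-o\), the regions are unchanged and \(\ind_{-o}=-\ind_o\), so \(R_j(C,-o)=R_{-j}(C,o)\). Substituting this gives \(P_C(q^{-1})\), consistent with \(P_{-C}(q)=P_C(q^{-1})\) from Section~\ref{sec:classical-formulas}.

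\textbf{Invariance.} I would first have to interpret the formal labels \([R]\): I take them as symbols up to the bijection of regions. Invariance then means the following. Under a direct self-tangency or weak triple-point perestroika, the homeomorphism \(F_1\) of Lemma~\ref{lem:smoothed-configuration-invariance} induces a bijection \(R\mapsto F_1(R)\) of regions. By Proposition~\ref{prop:component-refinement-invariance}, this bijection preserves Alexander indices and induces isomorphisms \(H_i(R)\cong H_i(F_1(R))\). Hence the relabeling \([R]\mapsto[F_1(R)]\) carries \(\mathcal Z_{\mathrm{face}}(C,s)\) term by term onto \(\mathcal Z_{\mathrm{face}}(C',s')\). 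For \(s=-o\), the lemma applies equally to the reversed orientation, because reversing the orientation does not change the curve before or after the move. Equation (16) is then invariant summand by summand.

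\textbf{Orientation independence of (16).} Replacing the reference orientation \(o\) by \(-o\) only permutes the index set \(\{o,-o\}\) over which \(s\) runs. The labels \([s,R]\) are indexed by the actual orientation state, not by the choice of reference, so the sum is literally unchanged.

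The main obstacle is not computational; it is making precise in what sense (15) is invariant when it carries formal region labels. I would resolve this by stating the invariance as equality modulo the label bijection induced by \(F_1\), which is exactly what Proposition~\ref{prop:component-refinement-invariance} supplies.
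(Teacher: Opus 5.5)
Your proposal is correct and follows essentially the same route as the paper's proof. Each product collapses to the weight of the marked face, and regrouping by Alexander index gives (15). The specializations follow from the region decomposition, (I1), and \(R_j(C,-o)=R_{-j}(C,o)\). Invariance comes from the region bijection \(R\mapsto F_1(R)\) of Lemma~\ref{lem:smoothed-configuration-invariance}; your relabeling \([R]\mapsto[F_1(R)]\) is exactly what the paper means when it says the isotopy identifies the state sets and every local weight. Orientation independence of (16) is the exchange of its two summands.

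For \(s=-o\), the cleanest justification that the same isotopy applies is that \(\widetilde C_{-o}\) and \(\widetilde C_o\) coincide as subsets of \(\mathbb R^2\), with only the circle orientations reversed. Hence \(F_t\) serves both orientation states at once, which the paper also uses implicitly.
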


\bluechange{The specialization and invariance checks are given in
Section~\ref{proof:face-state-sum}.}
\purplechange{Lemma~\ref{lem:smoothed-configuration-invariance} and
Propositions~\ref{prop:component-refinement-invariance},
\ref{prop:branch-invariance}, and \ref{prop:face-state-sum} prove
Part~{\rm (\ref{thm:main-components})} of Theorem~\ref{thm:main}.}

\section{Proofs of the statements}
\label{sec:proofs}

\bluechange{This section collects the complete proofs so that the chain of
constructions in Sections~\ref{sec:homological-refinements}--
\ref{sec:discussion} can be read without interruption.  The classical
inputs are labeled \((\mathrm{I1})\)--\((\mathrm{I5})\); equations
constructed in this paper retain their ordinary numbering.}

\thischange{Whenever a numbered manuscript formula is obtained below, its
authoritative number is repeated at the right margin.  The symbol
\(\because\) records the input identity used at that step.}

\subsection{Region homology and its decategorification}
\label{proof:region-decategorification}

\begin{proof}[Proof of Proposition~\ref{prop:region-decategorification}]
The finite disjoint union
\[
R_j(C,o)=\coprod_{R\in\pi_0(R_j(C,o))}R
\]
gives a direct sum already at the chain level:
\[
C_i(R_j(C,o);\mathbb Z)
=
\bigoplus_{R\in\pi_0(R_j(C,o))}C_i(R;\mathbb Z).
\]
The boundary operator preserves every summand.  Taking homology gives
\begin{equation*}
H_i(R_j(C,o);\mathbb Z)
\cong
\bigoplus_{R\in\pi_0(R_j(C,o))}H_i(R;\mathbb Z).
\tag{\thischange{1}}
\end{equation*}
The factors are indexed by the actual regions, so this is the canonical
decomposition in \eqref{eq:main-component-decomposition}.

By definition,
\[
\operatorname{grank}_{t,q}\mathcal H_{\mathrm{region}}(C,o)
=
\sum_{i,j}
\operatorname{rank}\mathcal H_{\mathrm{region}}^{i,j}(C,o)t^iq^j.
\]
Therefore
\begin{align*}
\operatorname{grank}_{t,q}\mathcal H_{\mathrm{region}}(C,o)
\big|_{t=-1}
&=
\sum_j
\left(
\sum_i(-1)^i
\operatorname{rank}\mathcal H_{\mathrm{region}}^{i,j}(C,o)
\right)q^j\\
&=
\sum_j\chi(R_j(C,o))q^j\\
&=P_C(q)\qquad \thischange{\because\ \mathrm{(I1)}}
\tag{\thischange{2}}
\end{align*}
Applying \(D_q^n\) term by term and using \(D_q^n(q^j)=j^nq^j\) gives
\begin{align*}
&\left.
D_q^n
\left(
\operatorname{grank}_{t,q}\mathcal H_{\mathrm{region}}(C,o)
\big|_{t=-1}
\right)\right|_{q=1}\\
&\quad=
\left.
\sum_{i,j}(-1)^i
\operatorname{rank}\mathcal H_{\mathrm{region}}^{i,j}(C,o)
j^nq^j
\right|_{q=1}\\
&\quad=
\sum_{i,j}(-1)^ij^n
\operatorname{rank}\mathcal H_{\mathrm{region}}^{i,j}(C,o).
\tag{\thischange{8}}
\end{align*}
For \(n=2\), substitution into \((\mathrm{I2})\) yields
\[
J^-(C)
=
1-\sum_{i,j}(-1)^ij^2
\operatorname{rank}\mathcal H_{\mathrm{region}}^{i,j}(C,o).
\]
\end{proof}

\subsection{Smoothing-circle homology and its decategorification}
\label{proof:circle-decategorification}

\begin{proof}[Proof of Proposition~\ref{prop:circle-decategorification}]
The generator \([K]\in H_1(K;\mathbb Z)\) has degree
\[
\bigl(1,\ind_o(K),\varepsilon(K)\bigr)
\]
and hence contributes
\[
tq^{\ind_o(K)}u^{\varepsilon(K)}
\]
to the graded rank.  Summing over all circles gives the unspecialized
formula in the proposition.  At \(t=u=-1\), its coefficient becomes
\[
(-1)^1(-1)^{\varepsilon(K)}=(-1)^{1+\varepsilon(K)}.
\]
The sign check is
\[
\begin{array}{c|c|c}
\rot_o(K)&\varepsilon(K)&(-1)^{1+\varepsilon(K)}\\ \hline
+1&1&+1\\
-1&0&-1
\end{array}
\]
and therefore
\[
(-1)^{1+\varepsilon(K)}=\rot_o(K).
\]
\thischange{It follows that}
\begin{align*}
\operatorname{grank}_{t,q,u}\mathcal H_{\mathrm{circle}}(C,o)
\big|_{t=u=-1}
&=
\sum_K\rot_o(K)q^{\ind_o(K)}\\
&=I_q(\widetilde C_o)\qquad
\thischange{\because\ \mathrm{(I4)}}
\tag{\thischange{3}}
\end{align*}
Finally, applying \(D_q^n\) term by term gives
\begin{equation*}
\left.
D_q^n
\left(
\operatorname{grank}_{t,q,u}\mathcal H_{\mathrm{circle}}(C,o)
\big|_{t=u=-1}
\right)
\right|_{q=1}
=
\sum_K\rot_o(K)\bigl(\ind_o(K)\bigr)^n.
\tag{\thischange{9}}
\end{equation*}
\end{proof}

\subsection{Ambient isotopy and invariance}
\label{proof:smoothed-configuration-invariance}

\begin{proof}[Proof of Lemma~\ref{lem:smoothed-configuration-invariance}]
Choose a closed disk \(D\) containing the perestroika and no other
singularity.  Outside \(D\), the two curves agree.  Inside \(D\), perform
the orientation-preserving smoothing at every local double point.  For a
direct self-tangency or a weak triple-point move, the pairing of the
boundary endpoints before and after the move is the same.  Move the
smoothed arcs directly through \(D\), keeping a collar of \(\partial D\)
fixed.  The unchanged endpoint pairing allows the local motions to remain
disjoint.  They give an ambient isotopy of \(D\), fixed near its boundary,
and extension by the identity gives the required plane isotopy \(F_t\).

If \(x\) lies in a complementary region \(R\), then
\(t\mapsto F_t(x)\) never meets \(F_t(\widetilde C_o)\).  The winding
number of \(F_t(\widetilde C_o)\) about \(F_t(x)\) is a continuous
integer-valued function of \(t\), hence is constant.  Region indices and
their averages across circles are therefore preserved.  Each oriented
circle is carried through an isotopy of oriented embedded circles, so its
rotation sign is preserved.  Finally,
\[
F_1(\partial R)=\partial F_1(R),
\]
and hence
\[
K\subset\partial R
\quad\Longleftrightarrow\quad
F_1(K)\subset\partial F_1(R).
\]
\end{proof}

\label{proof:homological-invariance}
\begin{proof}[Proof of Proposition~\ref{prop:component-refinement-invariance}]
The lemma gives a bijection
\[
\pi_0(R_j(C,o))\longrightarrow\pi_0(R_j(C',o')),
\qquad
R\longmapsto F_1(R).
\]
For each actual region, \(F_1|_R\colon R\to F_1(R)\) is a homeomorphism,
and hence
\[
(F_1|_R)_*\colon H_i(R;\mathbb Z)
\longrightarrow H_i(F_1(R);\mathbb Z)
\]
is an isomorphism.  Taking the direct sum gives the displayed
componentwise region isomorphism.

On the circle side, define the map generator by generator:
\[
[K]\longmapsto[F_1(K)].
\]
The lemma shows that this map preserves homological degree, averaged index,
rotation-sign degree, and every relation \(K\subset\partial R\).  It is
bijective, and therefore is the asserted triply graded isomorphism with
the retained region data.
\end{proof}

\subsection{Comparison and the smoothing-circle formula for \(J^-\)}
\label{proof:comparison}

\begin{proof}[Proof of Proposition~\ref{prop:comparison}]
Put \(a(q)=q^{1/2}-q^{-1/2}\).  The classical comparison input
\((\mathrm{I5})\) reads
\begin{equation*}
P_C(q)
=
1+a(q)I_q(C)
+\frac12\sum_{v\in V(C)}q^{\ind_o(v)}a(q)^2.
\tag*{\thischange{\(\because\ \mathrm{(I5)}\)}}
\end{equation*}
Substitute the smoothing input \((\mathrm{I3})\):
\begin{align*}
P_C(q)
&=
1+a(q)\left(
I_q(\widetilde C_o)
-\frac12\sum_vq^{\ind_o(v)}a(q)
\right)
+\frac12\sum_vq^{\ind_o(v)}a(q)^2\\
&=
1+a(q)I_q(\widetilde C_o)
-\frac12\sum_vq^{\ind_o(v)}a(q)^2
+\frac12\sum_vq^{\ind_o(v)}a(q)^2\\
&=1+a(q)I_q(\widetilde C_o)\qquad
\thischange{\because\ \mathrm{(I3)},\mathrm{(I5)}}
\tag{\thischange{6}}
\end{align*}
\thischange{Substituting \eqref{eq:region-specialization} and
\eqref{eq:circle-specialization-detail} gives}
{\color{thisbrown}
\begin{equation*}
\begin{aligned}
\operatorname{grank}_{t,q}\mathcal H_{\mathrm{region}}(C,o)
\big|_{t=-1}
={}&1+\left(q^{1/2}-q^{-1/2}\right)\\[-1mm]
&\quad\cdot
\operatorname{grank}_{t,q,u}\mathcal H_{\mathrm{circle}}(C,o)
\big|_{t=u=-1}.
\end{aligned}
\qquad\because\ \mathrm{(2)},\mathrm{(3)},\mathrm{(6)}
\tag{5}
\end{equation*}
}
\end{proof}

\label{proof:circle-Jminus}
\begin{proof}[Proof of Corollary~\ref{cor:circle-Jminus}]
Write
\[
a(q)=q^{1/2}-q^{-1/2},
\qquad
b(q)=I_q(\widetilde C_o).
\]
Equation~\eqref{eq:main-comparison-classical} gives
\(P_C(q)=1+a(q)b(q)\).  The required Euler derivatives are
\begin{align*}
D_qa(q)
&=
q\frac{d}{dq}\left(q^{1/2}-q^{-1/2}\right)\\
&=
\frac12\left(q^{1/2}+q^{-1/2}\right),
\end{align*}
and
\begin{align*}
D_q^2a(q)
&=
q\frac{d}{dq}
\left[
\frac12\left(q^{1/2}+q^{-1/2}\right)
\right]\\
&=
\frac14\left(q^{1/2}-q^{-1/2}\right).
\end{align*}
Consequently,
\[
a(1)=0,\qquad (D_qa)(1)=1,\qquad (D_q^2a)(1)=0.
\]
Using the product rule twice,
\begin{align*}
D_q^2(ab)
&=
D_q\bigl((D_qa)b+a(D_qb)\bigr)\\
&=
(D_q^2a)b+2(D_qa)(D_qb)+a(D_q^2b).
\end{align*}
Evaluation at \(q=1\) gives
\begin{align*}
\left.D_q^2P_C(q)\right|_{q=1}
&=
0\cdot b(1)+2\cdot1\cdot(D_qb)(1)
+0\cdot(D_q^2b)(1)\\
&=
2(D_qb)(1)
=
2b'(1).
\end{align*}
\thischange{Substitution into \((\mathrm{I2})\), followed by
\eqref{eq:circle-specialization-detail}, gives}
\begin{align*}
J^-(C)
&=1-2I_q'(\widetilde C_o)\big|_{q=1}
\qquad\thischange{\because\ \mathrm{(I2)},\mathrm{(6)}}\\
&=1-2\left.\frac{d}{dq}\left(
\operatorname{grank}_{t,q,u}\mathcal H_{\mathrm{circle}}(C,o)
\big|_{t=u=-1}\right)\right|_{q=1}
\qquad\thischange{\because\ \mathrm{(3)}}
\tag{\thischange{4}}
\end{align*}
\end{proof}

\subsection{The vertex and edge constructions}
\label{proof:full-LP}

\begin{proof}[Proof of Proposition~\ref{prop:full-LP}]
The two generators at \(v\) contribute
\[
q^{j(v)+1/2}+tq^{j(v)-1/2}
\]
to the graded rank.  Therefore
\[
\operatorname{grank}_{t,q}\mathcal V(C,o)
=
\sum_{v\in V(C)}
\left(q^{j(v)+1/2}+tq^{j(v)-1/2}\right).
\]
At \(t=-1\),
\begin{align*}
\operatorname{grank}_{t,q}\mathcal V(C,o)\big|_{t=-1}
&=
\sum_v\left(q^{j(v)+1/2}-q^{j(v)-1/2}\right)\\
&=
\sum_vq^{j(v)}
\left(q^{1/2}-q^{-1/2}\right).
\tag{\thischange{10}}
\end{align*}
\thischange{Together with \eqref{eq:circle-specialization-detail}, the classical
smoothing identity \((\mathrm{I3})\) becomes}
{\color{thisbrown}
\begin{equation*}
\begin{aligned}
I_q(C)={}&
\operatorname{grank}_{t,q,u}\mathcal H_{\mathrm{circle}}(C,o)
\big|_{t=u=-1}\\
&-\frac12\operatorname{grank}_{t,q}\mathcal V(C,o)\big|_{t=-1}
\qquad\because\ \mathrm{(3)},\mathrm{(10)},\mathrm{(I3)}.
\end{aligned}
\tag{7}
\end{equation*}
}
\purplechange{This proves the first assertion.}

For the derivative, each vertex contributes
\begin{align*}
\left.
\frac{d}{dq}
\left(q^{j(v)+1/2}-q^{j(v)-1/2}\right)
\right|_{q=1}
&=
\left(j(v)+\frac12\right)
-\left(j(v)-\frac12\right)\\
&=1.
\end{align*}
Hence
\[
\left.
\frac{d}{dq}
\left(
\operatorname{grank}_{t,q}\mathcal V(C,o)\big|_{t=-1}
\right)
\right|_{q=1}
=
\#V(C).
\]
Differentiating \((\mathrm{I3})\) gives
\[
I_q'(C)\big|_{q=1}
=
I_q'(\widetilde C_o)\big|_{q=1}
-\frac12\#V(C).
\]
Therefore
\begin{align*}
1-2I_q'(C)\big|_{q=1}
&=
1-2I_q'(\widetilde C_o)\big|_{q=1}+\#V(C)\\
&=
J^-(C)+\#V(C)\\
&=
J^+(C).
\tag{\todaychange{7'}}
\end{align*}
\purplechange{Here the second equality uses Equation~(4), and the last uses
the standard relation \(J^+(C)=J^-(C)+\#V(C)\)
\cite{Arnold1994Book}.  This proves the second assertion.}
\end{proof}

\label{proof:edge}
\begin{proof}[Homology calculation and proof of
Proposition~\ref{prop:edge-curvature}]
The edge complex splits as a direct sum over the double points.  At a fixed
\(v\), it is
\[
0\longrightarrow
\mathbb Z\langle e_l^v,e_r^v\rangle
\xrightarrow{\ \partial\ }
\mathbb Z\langle v\rangle
\longrightarrow0.
\]
An element \(ae_l^v+be_r^v\) is a cycle exactly when
\[
\partial(ae_l^v+be_r^v)=(a+b)v=0,
\]
that is, when \(b=-a\).  Thus
\[
\ker\partial=\mathbb Z\langle e_l^v-e_r^v\rangle.
\]
Because \(v=\partial e_l^v\), the map is surjective and \(H_0=0\).
Taking the direct sum over \(v\) proves
\[
H_1
\cong
\bigoplus_{v\in V(C)}
\mathbb Z\langle[e_l^v-e_r^v]\rangle.
\]

The two terms of the canonical cycle have averaged indices
\(j(v)+\frac12\) and \(j(v)-\frac12\), and turning contributions
\(+\beta_v\) and \(-\beta_v\).  Their normalized weighted contribution is
\begin{align*}
\frac1{2\pi}
\left(
\beta_vq^{j(v)+1/2}
-\beta_vq^{j(v)-1/2}
\right)
&=
\frac{\beta_v}{2\pi}
\left(q^{j(v)+1/2}-q^{j(v)-1/2}\right).
\end{align*}
Summing over all double points proves the total formula.
\end{proof}

\subsection{The canonical two-state homology}
\label{proof:two-state}

\begin{proof}[Proof of Proposition~\ref{prop:two-state}]
Orientation reversal gives, circle by circle,
\[
\ind_{-o}(K)=-\ind_o(K),
\qquad
\rot_{-o}(K)=-\rot_o(K),
\]
and consequently
\[
\varepsilon_{-o}(K)
=
\frac{1-\rot_o(K)}2
=
1-\varepsilon_o(K).
\]
\purplechange{Identifying the same underlying topological circle by the
identity map, its oriented fundamental classes satisfy
\([K]_{-o}=-[K]_o\).  Thus orientation reversal induces}
\[
(1,j,\varepsilon)\longmapsto(1,-j,1-\varepsilon).
\]
Applying orientation reversal twice gives the identity, while changing the
temporary orientation exchanges the two direct-sum factors.

{\color{thisbrown}
\begin{equation*}
\operatorname{grank}_{t,q,u}\mathcal H_{\mathrm{circle}}^{\mathrm{un}}(C)
=\operatorname{grank}_{t,q,u}\mathcal H_{\mathrm{circle}}(C,o)
+\operatorname{grank}_{t,q,u}\mathcal H_{\mathrm{circle}}(C,-o).
\tag{11}
\end{equation*}
}

Put
\[
\purplechange{G_o(q)}
:=
\operatorname{grank}_{t,q,u}\mathcal H_{\mathrm{circle}}(C,o)
\big|_{t=u=-1}
=
I_q(\widetilde C_o)
\]
and
\[
G_{\mathrm{un}}(q)
:=
\operatorname{grank}_{t,q,u}
\mathcal H_{\mathrm{circle}}^{\mathrm{un}}(C)
\big|_{t=u=-1}.
\]
\purplechange{By the preceding degree transformation and
Equation~\eqref{eq:two-state-circle-sum},}
\[
\purplechange{G_{\mathrm{un}}(q)=G_o(q)-G_o(q^{-1}).}
\]
\purplechange{Write \(G_o(q)=\sum_ja_jq^j\).  The three lowest positive
orders are}
\begin{align*}
m=1:\quad
\purplechange{\left.D_q\bigl(G_o(q)-G_o(q^{-1})\bigr)\right|_{q=1}}
&=
\sum_ja_j\bigl(j-(-j)\bigr)
=
\purplechange{2(D_qG_o)(1)},\\
m=2:\quad
\purplechange{\left.D_q^2\bigl(G_o(q)-G_o(q^{-1})\bigr)\right|_{q=1}}
&=
\sum_ja_j\bigl(j^2-(-j)^2\bigr)
=
0,\\
m=3:\quad
\purplechange{\left.D_q^3\bigl(G_o(q)-G_o(q^{-1})\bigr)\right|_{q=1}}
&=
\sum_ja_j\bigl(j^3-(-j)^3\bigr)
=
\purplechange{2(D_q^3G_o)(1)}.
\end{align*}
Generally,
\[
\purplechange{D_q^mG_o(q^{-1})}
=
\sum_ja_j(-j)^mq^{-j},
\]
and hence
{\color{lastbrown}
\begin{equation}
\left.D_q^mG_{\mathrm{un}}(q)\right|_{q=1}
=
\bigl(1-(-1)^m\bigr)
\left.D_q^mG_o(q)\right|_{q=1}
\qquad\purplechange{\because\ \mathrm{(11)}}.
\label{eq:two-state-moments}
\end{equation}
}
\purplechange{Because \(D_q=q\,d/dq\), its value at \(q=1\) equals the
ordinary derivative there.  Hence the case \(m=1\) gives
\(G_{\mathrm{un}}'(1)=2G_o'(1)\).}
Corollary~\ref{cor:circle-Jminus} therefore yields
{\color{lastbrown}
\begin{equation}
J^-(C)=1-G_{\mathrm{un}}'(1)
\qquad\thischange{\because\ \mathrm{(19)}_{m=1},\mathrm{(4)}},
\label{eq:two-state-Jminus}
\end{equation}
}
which is the asserted two-state formula.  Using the classical relation
\(J^+(C)=J^-(C)+\#V(C)\) \purplechange{\cite{Arnold1994Book}} then gives
{\color{lastbrown}
\begin{equation}
J^+(C)=1-G_{\mathrm{un}}'(1)+\#V(C)
\qquad\thischange{\because\ J^+=J^-+\#V(C)},
\label{eq:two-state-Jplus}
\end{equation}
}
which is the stated \(J^+\)-formula.

Finally, Lemma~\ref{lem:smoothed-configuration-invariance} and
Proposition~\ref{prop:component-refinement-invariance} give the same
underlying circle bijection for \(o\) and \(-o\).  It commutes with the
generatorwise orientation reversal, and therefore proves invariance
together with the exchange involution.
\end{proof}

\subsection{The rooted branch decomposition}
\label{proof:branch-invariance}

\begin{proof}[Proof of Proposition~\ref{prop:branch-invariance}]
\branchtext{The ambient isotopy \(F_t\) is the identity outside a compact
disk and therefore carries the unbounded region to the unbounded region.
It induces a graph isomorphism
\[
T(\widetilde C_o)\longrightarrow T(\widetilde C'_{o'})
\]
which fixes the root vertex and sends the edge of \(K\) to the edge of
\(F_1(K)\).  Removing the root commutes with this rooted-tree
isomorphism, so it bijects the actual branches.  The defining condition for
\finalchange{\(\beta_o(K)\)} is that the non-root endpoint of the dual edge of
\(K\) lie in that branch.  The isomorphism therefore bijects the circles assigned to
corresponding branches and preserves all three circle degrees.  \thischange{This proves
invariance of the indexed family}
{\color{thisbrown}
\begin{equation*}
\left(\mathcal H_{\mathrm{circle}}^{i,j,\varepsilon}(C,o;B)\right)_{
B\in\pi_0(T(\widetilde C_o)\setminus\{v_\infty\})}.
\tag{13}
\end{equation*}
}
It also preserves the number of circles in every branch and hence
{\color{thisbrown}
\begin{equation*}
\resizebox{0.98\textwidth}{!}{$\displaystyle
\mathcal P_{\mathrm{branch}}(C,o;t,q,u,w):=
\sum_{B\in\pi_0(T(\widetilde C_o)\setminus\{v_\infty\})}
\left(\sum_{\substack{K\in\pi_0(\widetilde C_o)\\\beta_o(K)=B}}
tq^{\ind_o(K)}u^{\varepsilon(K)}\right)w^{\#\beta_o^{-1}(B)}.$}
\tag{14}
\end{equation*}
}}
\end{proof}

\subsection{The homological face-state-sum}
\label{proof:face-state-sum}

\begin{proof}[Proof of Proposition~\ref{prop:face-state-sum}]
\facetext{For a fixed orientation \(s\), the normalized Alexander coloring
\(R\mapsto\ind_s(R)\) is fixed.  Hence the states are in bijection with
the choices of an actual marked face.  In the product
\(\prod_RW_{R_*}(R)\), all factors
except that of the marked face equal \(1\).  Therefore
\begin{align*}
\sum_{R_*\in\pi_0(\mathbb R^2\setminus\widetilde C_s)}
\prod_RW_{R_*}(R)
&=
\sum_{R_*}
\left(
\sum_i\operatorname{rank}H_i(R_*;\mathbb Z)t^i
\right)
q^{\ind_s(R_*)}[R_*]\\
&=
\sum_j\sum_{R\in\pi_0(R_j(C,s))}\sum_i
\operatorname{rank}H_i(R;\mathbb Z)t^iq^j[R]\purplechange{.}
\tag{\thischange{15}}
\end{align*}
\purplechange{The homomorphism \([R]\mapsto1\) sends Equation~(15) to
\(\operatorname{grank}_{t,q}\mathcal H_{\mathrm{region}}(C,s)\).  The
further specialization \(t=-1\) replaces each face homology by
\(\chi(R)\); Equation~\((\mathrm{I1})\) then gives \(P_C(q)\) for
\(s=o\) and \(P_C(q^{-1})\) for \(s=-o\).}

{\color{thisbrown}
\purplechange{For \(s=o\), applying \([R]\mapsto v^{k(R)}\) to
Equation~(15) gives}
\begin{equation*}
\resizebox{0.98\textwidth}{!}{$\displaystyle
\mathcal P_{\mathrm{region}}(C,o;t,q,v):=
\sum_j\sum_{R\in\pi_0(R_j(C,o))}\sum_i
\operatorname{rank}H_i(R;\mathbb Z)t^iq^jv^{k(R)}.$}
\tag{12}
\end{equation*}
\purplechange{Summing Equation~(15) over the two orientation states gives}
\begin{equation*}
\resizebox{0.98\textwidth}{!}{$\displaystyle
\mathcal Z_{\mathrm{face}}^{\mathrm{un}}(C;t,q):=
\sum_{s\in\{o,-o\}}\sum_j\sum_{R\in\pi_0(R_j(C,s))}\sum_i
\operatorname{rank}H_i(R;\mathbb Z)t^iq^j[s,R].$}
\tag{16}
\end{equation*}
\purplechange{The homomorphism \([s,R]\mapsto1\) sends Equation~(16) to
the graded rank of the direct sum}
\begin{equation*}
\resizebox{0.98\textwidth}{!}{$\displaystyle
\mathcal H_{\mathrm{region}}^{\mathrm{un},i,j}(C):=
\bigoplus_{s\in\{o,-o\}}\mathcal H_{\mathrm{region}}^{i,j}(C,s)
=\mathcal H_{\mathrm{region}}^{i,j}(C,o)\oplus
\mathcal H_{\mathrm{region}}^{i,j}(C,-o).$}
\tag{17}
\end{equation*}
\purplechange{The homomorphism \([s,R]\mapsto v^{k(R)}\) sends
Equation~(16) to its component evaluation}
\begin{equation*}
\mathcal P_{\mathrm{region}}^{\mathrm{un}}(C;t,q,v):=
\sum_{s\in\{o,-o\}}\mathcal P_{\mathrm{region}}(C,s;t,q,v).
\tag{18}
\end{equation*}
}

The ambient isotopy in
Lemma~\ref{lem:smoothed-configuration-invariance} preserves the colored
dual tree, the actual faces, and their homology; hence it identifies the
state sets and every local weight.  This proves invariance.  In the
unoriented expression, changing the temporary orientation only exchanges
the two summands indexed by \(o\) and \(-o\), proving independence of that
choice.}
\end{proof}

{\color{strictgreen}
\subsection{The infinite family and strictness}
\label{proof:strictness}

\begin{proof}[Proof of Proposition~\ref{prop:infinite-pairs} and the
calculation in Proposition~\ref{prop:component-Poincare}]
The repeated blocks in Figure~\ref{fig:infinite-pairs} give
\[
\#V(C_n)=\#V(C'_n).
\]
The complete smoothing configurations for \(n=1,2\), which display the
pattern for general \(n\), are shown in Figure~\ref{fig:smoothing-family}.

\begin{figure}[ht]
\centering
\includegraphics[width=\textwidth]{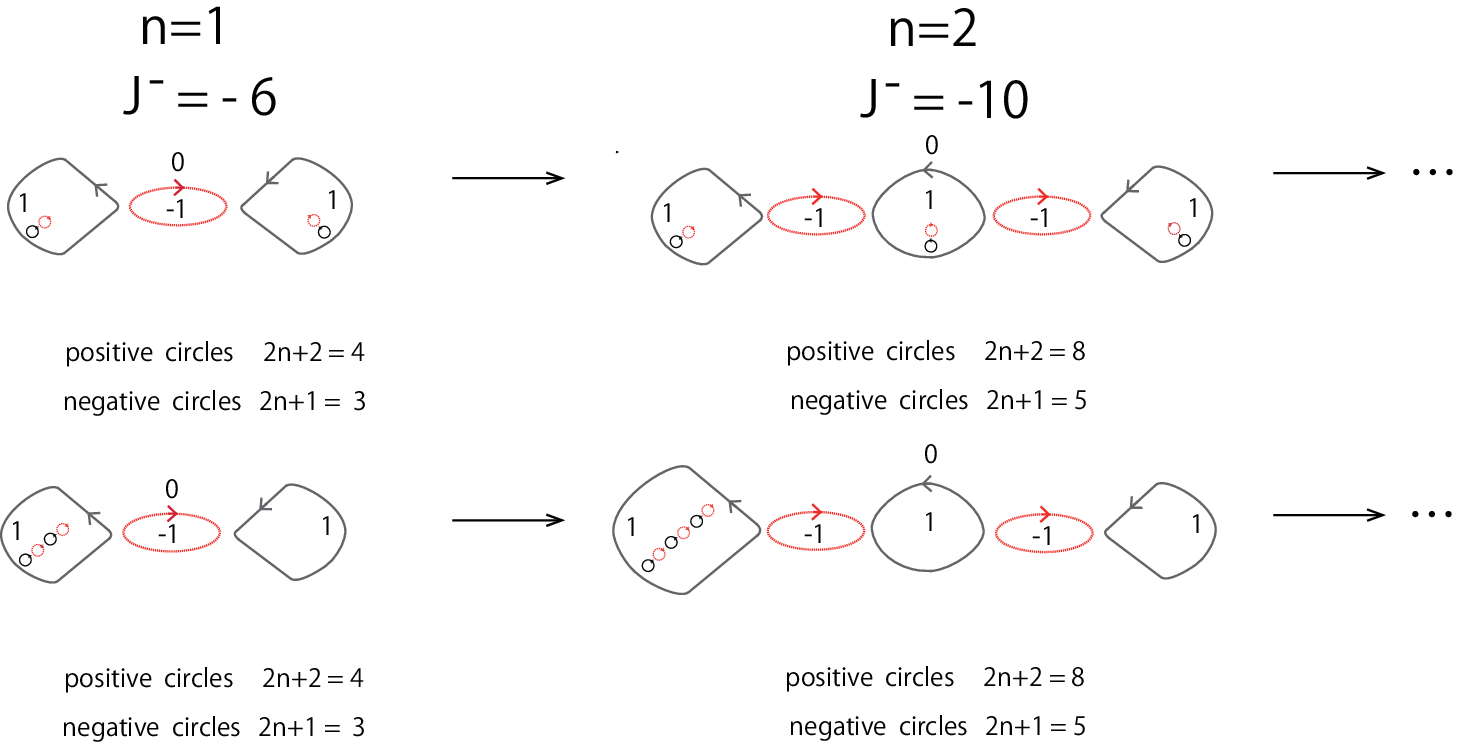}
\caption{\newchange{The smoothing configurations for the first
two pairs.  The orientations, Alexander indices, and positive/negative
circle counts are displayed.}}
\label{fig:smoothing-family}
\end{figure}

The following table records every circle type, its rotation sign, its
averaged Alexander index, and its multiplicity in either member of the
pair:
\[
\begin{array}{c|c|c|c|c}
\text{circle type}&\rot&\ind&C_n&C'_n\\ \hline
\text{outer boundary of an index-one region}
&+1&\tfrac12&n+1&n+1\\
\text{inner positive circle}
&+1&\tfrac32&n+1&n+1\\
\text{inner negative circle}
&-1&\tfrac12&n+1&n+1\\
\text{outer boundary of an index-minus-one region}
&-1&-\tfrac12&n&n
\end{array}
\]
Thus each smoothing has \(2n+2\) positive circles and \(2n+1\) negative
circles.  Equivalently, the complete table of nonzero ordinary triply
graded groups is
\[
\begin{array}{c|c|c}
(i,j,\varepsilon)&
\mathcal H_{\mathrm{circle}}^{i,j,\varepsilon}(C_n,o_n)&
\mathcal H_{\mathrm{circle}}^{i,j,\varepsilon}(C'_n,o'_n)\\ \hline
(1,\tfrac12,1)&\mathbb Z^{n+1}&\mathbb Z^{n+1}\\
(1,\tfrac32,1)&\mathbb Z^{n+1}&\mathbb Z^{n+1}\\
(1,\tfrac12,0)&\mathbb Z^{n+1}&\mathbb Z^{n+1}\\
(1,-\tfrac12,0)&\mathbb Z^n&\mathbb Z^n
\end{array}
\]
with all other groups zero.  Hence the ordinary triply graded
smoothing-circle homologies are isomorphic.

By \((\mathrm{I4})\), their common smoothing polynomial is
\begin{align*}
I_q(\widetilde{(C_n)}_{o_n})
=
I_q(\widetilde{(C'_n)}_{o'_n})
&=
(n+1)q^{1/2}+(n+1)q^{3/2}\\
&\quad{}-(n+1)q^{1/2}-nq^{-1/2}\\
&=
(n+1)q^{3/2}-nq^{-1/2}.
\end{align*}
Equation~\eqref{eq:main-comparison-classical} therefore gives
\(P_{C_n}(q)=P_{C'_n}(q)\).  Moreover,
\begin{align*}
J^-(C_n)=J^-(C'_n)
&=
1-2\left.
\frac{d}{dq}
\left((n+1)q^{3/2}-nq^{-1/2}\right)
\right|_{q=1}
\qquad\purplechange{\because\ \mathrm{(4)}}\\
&=
1-2\left(\frac32(n+1)+\frac12n\right)\\
&=
-4n-2.
\end{align*}
Thus \(n=1\) gives \(-6\) and \(n=2\) gives \(-10\).

At index \(1\), the actual region components are
\[
\begin{array}{c|c|c|c|c|c}
&\text{region type}&\text{number}&H_0&H_1&\latestchange{k(R)}\\ \hline
C_n&\text{disk with two holes}
&n+1&\mathbb Z&\mathbb Z^2&2\\
C'_n&\text{disk with \(2n+2\) holes}
&1&\mathbb Z&\mathbb Z^{2n+2}&2n+2\\
C'_n&\text{disk}
&n&\mathbb Z&0&0
\end{array}
\]
while the data outside index \(1\) agree.  After forgetting the actual
components and the \(k\)-grading, both sides have
\[
H_0(R_1;\mathbb Z)\cong\mathbb Z^{n+1},
\qquad
H_1(R_1;\mathbb Z)\cong\mathbb Z^{2n+2}.
\]
Thus the ordinary bigraded region homologies agree.  In contrast,
\[
\mathcal H_{\mathrm{region}}^{1,1,2}(C_n,o_n)
\cong\mathbb Z^{2n+2},
\qquad
\mathcal H_{\mathrm{region}}^{1,1,2}(C'_n,o'_n)=0,
\]
and consequently
\[
\mathcal H_{\mathrm{region}}^{*,*,*}(C_n,o_n)
\not\cong
\mathcal H_{\mathrm{region}}^{*,*,*}(C'_n,o'_n).
\]

{\color{revisionpurple}
For completeness, the component Poincar\'e evaluation is now calculated
immediately after the region homology.  Each of the \(n+1\) index-one regions
of \(C_n\) contributes
\[
(1+2t)v^2,
\]
so the coefficient of \(q^1\) is
\begin{equation}
(n+1)(1+2t)v^2
\qquad\because\ \mathrm{(12)}.
\label{eq:family-component-Cn}
\end{equation}
For \(C'_n\), the disk with \(2n+2\) holes contributes
\[
\bigl(1+(2n+2)t\bigr)v^{2n+2},
\]
and its \(n\) disk components contribute \(n\).  Thus its coefficient is
\begin{equation}
\bigl(1+(2n+2)t\bigr)v^{2n+2}+n
\qquad\because\ \mathrm{(12)}.
\label{eq:family-component-Cnprime}
\end{equation}
These expressions are unequal.  At \(v=1\), however, both become
\[
n+1+(2n+2)t,
\]
and at \(t=-1\) both give the same Euler-characteristic coefficient
\(-(n+1)\).  The Euler derivative of their difference is
\begin{align*}
&\left.
\left(v\frac{\partial}{\partial v}\right)
\left[
\bigl(1+(2n+2)t\bigr)v^{2n+2}+n
-(n+1)(1+2t)v^2
\right]
\right|_{v=1}\\
&=
(2n+2)\bigl(1+(2n+2)t\bigr)
-2(n+1)(1+2t)\\
&=
4n(n+1)t.
\end{align*}
}

{\color{revisionblue}
We now use the weaker branch structure
\eqref{eq:branch-circle-decomposition}, rather than the full collection of
all relations \(K\subset\partial R\).  For \(C_n\), removing the root of
the dual tree gives \(n+1\) branches with three circle edges and \(n\)
branches with one circle edge.  For \(C'_n\), it gives one branch with
\(2n+3\) circle edges and \(2n\) branches with one circle edge.  Hence the
branch-size multisets are
\[
\{\underbrace{3,\ldots,3}_{n+1},
\underbrace{1,\ldots,1}_{n}\}
\quad\text{and}\quad
\{2n+3,\underbrace{1,\ldots,1}_{2n}\},
\]
which are unequal for every \(n\ge1\).  Therefore
\[
\left(\mathcal H_{\mathrm{circle}}(C_n,o_n;B)\right)_B
\not\cong
\left(\mathcal H_{\mathrm{circle}}(C'_n,o'_n;B')\right)_{B'}
\]
as homologies equipped with their branch decompositions.

The numerical branch evaluations make the same difference explicit.
\purplechange{By Equation~\textup{(14)},}
{\color{lastbrown}
\begin{align}
\mathcal P_{\mathrm{branch}}(C_n,o_n;t,q,u,w)
&=
(n+1)t
\left(q^{1/2}u+q^{3/2}u+q^{1/2}\right)w^3
+ntq^{-1/2}w,
\label{eq:family-branch-Cn}\\
\mathcal P_{\mathrm{branch}}(C'_n,o'_n;t,q,u,w)
&=
t\left(
q^{1/2}u+(n+1)q^{3/2}u+(n+1)q^{1/2}
\right)w^{2n+3}\notag\\
&\quad{}+
nt\left(q^{1/2}u+q^{-1/2}\right)w.
\label{eq:family-branch-Cnprime}
\end{align}
}
They become the same ordinary triply graded rank after \(w=1\), exactly as
the circle table requires.
}

We have therefore exhibited an infinite family on which Viro's polynomial,
its natural ordinary bigraded homological lift, and the ordinary triply
graded circle homology all agree.  The component-graded region homology and
the branch-decomposed smoothing-circle homology distinguish every pair.
This proves the strictness assertion in the stated positive form.
\end{proof}
}

\begin{remark}
\rthreechange{The construction underlying Proposition~\ref{prop:infinite-pairs} is not tied to the particular rooted dual trees displayed above.  The same mechanism admits many further variants obtained by changing the rooted dual-tree pattern while keeping the ordinary graded region and smoothing-circle homologies fixed and altering the component or branch decomposition.  Proposition~\ref{prop:infinite-pairs} records one explicit infinite family of this broader construction.}
\end{remark}

\section*{Acknowledgments and AI tool disclosure}

This work was supported by JSPS KAKENHI Grant Number JP25K06999.
ChatGPT (OpenAI) was used to assist with English-language editing,
expository organization, routine typesetting, and checks of mathematical
notation and internal consistency.  All mathematical statements, proofs,
references, and the final text were independently checked by the author,
who takes full responsibility for the manuscript.

\bibliographystyle{plain}
\bibliography{CatJ}

\end{document}